\theoremstyle{plain}
\newtheorem{teor}{Theorem}
\numberwithin{teor}{section}
\numberwithin{equation}{section}
\theoremstyle{definition}
\newaliascnt{defi}{teor}
\newtheorem{defi}[defi]{Definition}
\theoremstyle{plain}
\newaliascnt{lemma}{teor}%
\newtheorem{lemma}[lemma]{Lemma}
\theoremstyle{plain}
\newaliascnt{prop}{teor}%
\newtheorem{prop}[prop]{Proposition}
\theoremstyle{plain}
\newaliascnt{cor}{teor}%
\theoremstyle{definition}
\newaliascnt{ex}{teor}%
\theoremstyle{definition}
\newaliascnt{oss}{teor}%
\newtheorem{oss}[oss]{Remark}
\DeclarePairedDelimiter{\abs}{\lvert}{\rvert}
\DeclarePairedDelimiter{\norma}{\lVert}{\rVert}
\DeclareMathOperator{\sbv}{SBV}
\DeclareMathOperator{\bv}{BV}
\newcommand{\marcomm}[1]{\marginpar{\begin{flushright}#1\end{flushright}}}%
\newcommand{\R}{\mathbb{R}}
\newcommand{\sbvv}{\sbv^{\frac{1}{2}}(\R^n)}
\newcommand{\N}{\mathbb{N}}
\newcommand{\Ln}{\mathcal{L}^n}
\newcommand{\Hn}{\mathcal{H}^{n-1}}
\newcommand{\eps}{\varepsilon}
\DeclareMathOperator{\divv}{div}
\DeclareMathOperator{\loc}{loc}
\newcommand{\leqnomode}{\tagsleft@true\let\veqno\@@leqno}
\newcommand{\reqnomode}{\tagsleft@false\let\veqno\@@eqno}
\title{A free boundary problem in thermal insulation with a prescribed heat source}
\author{P. Acampora, E. Cristoforoni, C. Nitsch, C. Trombetti }
\date{}
\newcommand{\Addresses}{{%
 \bigskip 
 \footnotesize 
 
 \textsc{Dipartimento di Matematica e Applicazioni ``R. Caccioppoli'', Universit\`a degli studi di Napoli Federico II, Via Cintia, Complesso Universitario Monte S. Angelo, 80126 Napoli, Italy.}\par\nopagebreak 
 
 \medskip 
 
 \textit{E-mail address}, P.~Acampora: \texttt{paolo.acampora@unina.it} 
  
 \medskip 
 
 \textit{E-mail address}, C.~Nitsch: \texttt{c.nitsch@unina.it}

  \medskip 
 
 \textit{E-mail address}, C.~Trombetti: \texttt{cristina@unina.it} 
 
 \medskip 
 
\textsc{Mathematical and Physical Sciences for Advanced Materials and Technologies, Scuola Superiore Meridionale, Largo San Marcellino 10, 80126, Napoli, Italy.}\par\nopagebreak 
 
 \medskip 
 
 \textit{E-mail address}, E.~Cristoforoni: \texttt{emanuele.cristoforoni@unina.it} 
}}     
\begin{document}
\renewcommand*{\sectionautorefname}{Section}
\reversemarginpar
\maketitle

\begin{abstract}

We study the thermal insulation of a bounded body 
 $\Omega\subset\R^n$, under a prescribed
heat source $f>0$, via a bulk layer of insulating material. We consider a model of heat transfer between the insulated body and the environment determined by convection; this corresponds to Robin boundary conditions on the free boundary of the layer. We show that a minimal configuration exists and that it satisfies uniform density estimates.

 \textsc{MSC 2020:} 35R35, 35J25, 35A01.
 
\textsc{Keywords:} Robin, thermal insulation, free boundary, heat source
\end{abstract}

\section{Introduction}\label{introduction}
Let $\Omega\subseteq\R^n$ be an open bounded set with smooth boundary, let $f\in L^2(\Omega)$ be a positive function and let $\beta,\, C_0$ be positive constants. We consider the following energy functional
\begin{equation}\label{eq: fun 1}\mathcal{F}(A,v)=\int_A \abs{\nabla v}^2\,d\Ln +\beta\int_{\partial A } v^2\,d\Hn -2\int_\Omega fv\,d\Ln+C_0\Ln(A\setminus\Omega),\end{equation}
and the variational problem
\begin{equation}%
\label{problema}
\inf \Set{\mathcal{F}(A,v) | \begin{aligned}
&A\supseteq \Omega \text{ open, bounded and Lipschitz} \\
&v\in W^{1,2}(A), \: v\ge 0\,\text{in } A
\end{aligned}}.
\end{equation}

This problem is related to the following thermal insulation problem: for a given heat source $f$ distributed in a conductor $\Omega$, find the best possible configuration of insulating material surrounding $\Omega$. A similar problem has been studied in \cite{symbreak} and \cite{robinins} for a thin insulating layer, and in \cite{CK}, \cite{BucLuc12} and \cite{nahon} for a prescribed temperature in $\Omega$.

For a fixed open set $A$ with Lipschitz boundary, we have, via the direct methods of the calculus of variations, that there exists $u_A\in W^{1,2}(A)$ such that 
\[\mathcal{F}(A,u_A)\leq \mathcal{F}(A,v),\]
for all $v\in W^{1,2}(A)$, with $v\ge0$ in $A$. Furthermore $u_A$ solves the following stationary problem, with Robin boundary condition on $\partial A$%
. Precisely 
\[\begin{cases}-\Delta u_A= f & \text{in }\Omega,\\[3 pt]
\dfrac{\partial u_A^+}{\partial \nu\hphantom{\scriptstyle{+}}}=\dfrac{\partial u_A^-}{\partial \nu\hphantom{\scriptstyle{+}}} & \text{on }\partial\Omega, \\[6 pt]
\Delta u_A=0 & \text{in } A\setminus\Omega, \\[3 pt]
\dfrac{\partial u_A}{\partial \nu} +\beta u_A=0 & \text{on } \partial A,
\end{cases}\]
where $u_A^-$ and $u_A^+$ denote the traces of $u_A$ on $\partial\Omega$ in $\Omega$ and in $A\setminus\Omega$ respectively. That is 
\begin{equation}\label{eq: el}\int_A \nabla u_A\cdot \nabla\varphi\,d\Ln+\beta\int_{\partial A} u_A\varphi\,\Hn=\int_\Omega f\varphi\,d\Ln,\end{equation}
for all $\varphi\in W^{1,2}(A)$.  The Robin boundary condition represents the case when the heat transfer with the environment is conveyed by convection.

If for any couple $(A,v)$ with $A$ an open bounded set with Lipschitz boundary containing $\Omega$ and $v\in W^{1,2}(A)$, $v\geq0$ in $A$, we identify $v$ with $v\chi_A$, where $\chi_A$ is the characteristic function of $A$, and the set $A$ with the support of $v$, then the energy functional~\eqref{eq: fun 1} becomes 
\begin{equation}\label{eq: fun 2}\mathcal{F}(v)=\int_{\R^n} \abs{\nabla v}^2\,d\Ln+\beta\int_{J_v}\left( \overline{v}^2+\underline{v}^2\right)\,d\Hn-2\int_\Omega fv\,d\Ln+C_0\Ln(\set{v>0}\setminus\Omega),\end{equation}
and the minimization problem~\eqref{problema} becomes
\begin{equation}
\label{problemar}
\inf \Set{\mathcal{F}(v) | %
v\in \sbv^{\frac{1}{2}}(\R^n)\cap W^{1,2}(\Omega) 
},
\end{equation}
where $\overline{v}$ and $\underline{v}$ are respectively the approximate upper and lower limits of $v$, $J_v$ is the jump set and $\nabla v$ is the absolutely continuous part of the derivative of $v$. See \autoref{notations} for the definitions. 

We state the main results of this paper in the two following theorems.
\begin{teor}
\label{teor: mainth1}
Let $n\ge 2$, let $\Omega\subset\R^n$ be an open bounded set with  $C^{1,1}$ boundary, let $f\in L^2(\Omega)$, with $f>0$ almost everywhere in $\Omega$. Assume in addition that, if $n=2$,
\begin{equation}\label{eq:cond n=2}\norma{f}_{2,\Omega}^2<C_0\lambda_\beta(B)\mathcal{L}^2(\Omega),\end{equation}
where $B$ is a ball having the same measure of $\Omega$. Then problem~\eqref{problemar}
 admits a solution. Moreover,  if $p>n$ and $f\in L^p(\Omega)$, then there exists a positive constant $C=C(\Omega,f,p,\beta,C_0)$ such that if $u$ is a minimizer to problem~\eqref{problemar} then
 \[
 \norma{u}_\infty\le C.
 \]
\end{teor}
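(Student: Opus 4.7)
I would prove the two assertions in sequence: first existence by the direct method in $\sbvv$, then the $L^\infty$ bound by a De Giorgi truncation applied to the minimality inequality.

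\textbf{Existence.} I would start from a minimizing sequence $\{u_k\}\subset\sbvv$. A bound $\mathcal{F}(u_k)\le\mathcal{F}(0)+1=1$ combined with Young's inequality applied to the linear term $-2\int_\Omega fu_k$ yields uniform control on $\int_{\R^n}\abs{\nabla u_k}^2\,d\Ln$, on the jump energy $\int_{J_{u_k}}(\overline{u_k}^2+\underline{u_k}^2)\,d\Hn$, and on $\Ln(\{u_k>0\}\setminus\Omega)$, once $\norma{u_k}_{L^2(\Omega)}^2$ has been reabsorbed via a Robin--Poincar\'e inequality coupling the Dirichlet energy on $\Omega$ with the Robin-type jump contribution on $\partial\Omega$. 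In dimension $n=2$ this absorption is scale-critical, and hypothesis \eqref{eq:cond n=2} enters precisely here: a comparison with the first Robin eigenfunction of the ball $B$ of the same measure as $\Omega$ (via a Faber--Krahn-type inequality) ensures both $\inf\mathcal{F}>-\infty$ and that the supports of the $u_k$ cannot spread indefinitely. After truncating at a large level $M$ to apply Ambrosio's $\sbv$-compactness theorem (with a diagonal argument in $M$), the lower semicontinuity of the Dirichlet and jump terms, Fatou's lemma for the support measure, and $L^1(\Omega)$-continuity of the linear term deliver a minimizer.

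\textbf{$L^\infty$ bound.} I would fix a minimizer $u$ and test against $u\wedge k$ for $k>0$. Since $\{u\wedge k>0\}=\{u>0\}$, the $C_0$-term cancels, and minimality gives
\[
\int_{\{u>k\}}\abs{\nabla u}^2\,d\Ln+\beta\int_{J_u}\bigl(\overline{u}^2+\underline{u}^2-(\overline{u}\wedge k)^2-(\underline{u}\wedge k)^2\bigr)\,d\Hn\le 2\int_\Omega f(u-k)^+\,d\Ln.
\]
The jump surplus on the left is non-negative and can be discarded. H\"older's inequality with exponent $p>n$ combined with a Sobolev-type inequality in $\sbvv$ applied to $(u-k)^+$ then yields a super-linear decay inequality for $a(k):=\Ln(\{u>k\})$, which closes via the classical Stampacchia iteration lemma to produce $\norma{u}_\infty\le C(\Omega,f,p,\beta,C_0)$.

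\textbf{Main obstacle.} The most delicate ingredient will be the Sobolev-type inequality in $\sbvv$ for $(u-k)^+$: since this truncation may inherit a non-trivial jump set from $u$, the standard $W^{1,2}$ Sobolev embedding is not directly available, and one must couple the interior gradient control with a careful estimate of the Robin/jump boundary energy. The threshold $p>n$ is the natural one for the Stampacchia iteration to terminate at a finite level, mirroring the classical $L^\infty$ estimate for $-\Delta u=f$.
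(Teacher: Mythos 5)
Your overall architecture matches the paper's (direct method plus a Faber--Krahn comparison with $\lambda_\beta(B)$ for existence and coercivity, with the $n=2$ hypothesis entering exactly where you place it; then truncation at level $k$, a Sobolev-type inequality in $\sbvv$, and Stampacchia's lemma for the $L^\infty$ bound). Two specific steps, however, do not go through as you state them.

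First, the compactness tool. You propose to truncate at level $M$ and apply Ambrosio's $\sbv$ compactness theorem, but that theorem needs a uniform bound on $\Hn(J_{u_k})$ (or on a jump energy $\int_{J_{u_k}}\theta(\overline{u}_k-\underline{u}_k)\,d\Hn$ with $\theta(t)/t\to+\infty$ as $t\to0^+$), and the energy here only controls $\int_{J_{u_k}}(\overline{u}_k^2+\underline{u}_k^2)\,d\Hn$, which does not penalize an accumulation of small jumps; truncation does not repair this. The paper instead invokes the dedicated compactness theorem in $\sbvv$ (\autoref{teor: sbv}, from Bucur--Giacomini), which is built precisely for this functional and yields both the $L^2_{\loc}$ convergence and the lower semicontinuity of the gradient and jump terms. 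You should cite that theorem rather than Ambrosio's.

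Second, in the $L^\infty$ step you discard the jump surplus and then, in your ``main obstacle'' paragraph, correctly observe that you will need a Sobolev inequality for $\varphi_k=(u-k)^+$ that controls its jump energy --- but the term you discarded is exactly the one that provides this control. Since $\overline{u}^2-(\overline{u}\wedge k)^2\ge\bigl((\overline{u}-k)^+\bigr)^2$ (and likewise for $\underline{u}$), your minimality inequality in fact yields
\[
\int_{\R^n}\abs{\nabla\varphi_k}^2\,d\Ln+\beta\int_{J_{\varphi_k}}\left(\overline{\varphi_k}^2+\underline{\varphi_k}^2\right)\,d\Hn\le 2\int_\Omega f\varphi_k\,d\Ln,
\]
and the left-hand side dominates $C^{-1}\norma{\varphi_k}_{2\cdot 1^*}^2$ by the Poincar\'e-type inequality of \autoref{lemma: poincare} (proved by applying the $\bv\hookrightarrow L^{1^*}$ embedding to $\varphi_k^2$ and absorbing the lower-order term using the bound on $\Ln(\set{u>0})$). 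With that correction your iteration closes exactly as in the paper. Your route via direct energy comparison with $u\wedge k$ is a genuine (mild) simplification over the paper's, which first establishes the Euler--Lagrange identity $a(u,\varphi_k)=\int_\Omega f\varphi_k\,d\Ln$ (\autoref{teor: euler-lagrange}, requiring a countability argument on the jump sets of $u+sv$) and then uses $a(u,\varphi_k)\ge a(\varphi_k,\varphi_k)$; both roads lead to the same Stampacchia inequality.
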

\begin{teor}
\label{teor: mainth2}
Let $n\ge 2$, let $\Omega\subset\R^n$ be an  open bounded set with $C^{1,1}$ boundary, let $p>n$ and let $f\in L^p(\Omega)$, with $f>0$ almost everywhere in $\Omega$. Assume in addition that, if $n=2$ condition~\eqref{eq:cond n=2} holds true. Then there exist positive constants  $\delta_0=\delta_0(\Omega,f,p,\beta,C_0)$, $c=c(\Omega,f,p,\beta,C_0)$, $C=C(\Omega,f,p,\beta,C_0)$ such that if $u$ is a minimizer to problem \eqref{problemar} then
\[
    u\ge\delta_0 \qquad \text{$\Ln$-a.e. in }\set{u>0},
\]
and the jump set $J_u$ satisfies the density estimates
\[cr^{n-1}\leq\Hn(J_u\cap B_r(x))\leq C r^{n-1},\]
with $x\in\overline{J_u}$, and $0<r<d(x,\partial\Omega)$. In particular, we have
\[
\Hn(\overline{J_u}\setminus J_u)=0.
\]
\end{teor}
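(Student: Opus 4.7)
The plan is to proceed in three dependent stages: (i) establish the pointwise lower bound $u\ge\delta_0$ on $\{u>0\}$; (ii) use this bound to derive both density estimates for $J_u$; (iii) deduce $\Hn(\overline{J_u}\setminus J_u)=0$ as a standard consequence of the two-sided density.

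For (i), since $-\Delta u=f>0$ in $\Omega$ by~\eqref{eq: el}, the strong maximum principle gives $\min_{\overline\Omega}u=\min_{\partial\Omega}u>0$, so it suffices to bound $u$ from below on $\{u>0\}\setminus\overline\Omega$. My approach is to test the minimality of $u$ against the truncated competitor $v_\delta:=u\chi_{\{u>\delta\}}$, admissible for a.e.\ $\delta>0$ below $\min_{\overline\Omega}u$ (so that $v_\delta=u$ near $\overline\Omega$). The energy difference $\mathcal{F}(u)-\mathcal{F}(v_\delta)$ gains $\int_{\{0<u\le\delta\}}|\nabla u|^2+C_0\,\Ln(\{0<u\le\delta\}\setminus\Omega)$ against the cost $\beta\delta^2\,\Hn(\{u=\delta\})$ of the new jump on the level set. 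Controlling the latter by coarea and isoperimetric arguments and using the $L^\infty$ bound from~\autoref{teor: mainth1} yields a differential inequality for $m(\delta):=\Ln(\{0<u\le\delta\}\setminus\Omega)$ forcing $m(\delta)=0$ for all $\delta<\delta_0=\delta_0(\Omega,f,p,\beta,C_0)$.

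For the upper density in (ii), I would test minimality against the competitor $v:=u\chi_{\R^n\setminus B_r(x)}$ for $x\in\overline{J_u}$ and $0<r<d(x,\partial\Omega)$, obtaining
\[
\int_{B_r(x)}|\nabla u|^2\,d\Ln+\beta\int_{J_u\cap B_r(x)}(\overline u^2+\underline u^2)\,d\Hn\le\beta\int_{\partial B_r(x)}u^2\,d\Hn+2\int_{B_r(x)\cap\Omega}fu\,d\Ln.
\]
The right-hand side is $O(r^{n-1})$ by the $L^\infty$ bound, and since every jump point of $u$ has $\overline u\ge\delta_0$ by (i), the jump integral on the left dominates $\beta\delta_0^2\,\Hn(J_u\cap B_r(x))$, yielding the claimed upper density.

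The main obstacle is the lower density estimate, for which I plan to adapt the De~Giorgi--Carriero--Leaci scheme. Suppose for contradiction that $\Hn(J_u\cap B_r(x))\le\eta r^{n-1}$ for some small $\eta$ at a point $x\in\overline{J_u}$. Via a local SBV--Poincar\'e-type inequality one constructs a Sobolev competitor $\tilde u\in W^{1,2}(B_{r/2}(x))$ agreeing with $u$ near $\partial B_{r/2}(x)$, with no interior jumps and satisfying $\int_{B_{r/2}(x)}|\nabla\tilde u|^2\le C\int_{B_r(x)}|\nabla u|^2$ up to a remainder controlled by $\eta$. Testing $\mathcal{F}$ against $\tilde u$ saves at least $\beta\delta_0^2\,\Hn(J_u\cap B_{r/2}(x))$ at the cost of only a flux term across $\partial B_{r/2}(x)$; an energy-decay iteration then contradicts $x\in\overline{J_u}$ unless $\eta\ge c=c(\Omega,f,p,\beta,C_0)$. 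The delicate part is balancing the Robin-type jump term $\beta(\overline u^2+\underline u^2)$ against the bulk penalty $C_0$, which dictates the correct Sobolev replacement. Finally, (iii) follows by a standard covering argument: the lower density property passes to $\overline{J_u}$ by closure, so $\Hn$-a.e.\ point of $\overline{J_u}$ is a density point of $J_u$.
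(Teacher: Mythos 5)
Your overall architecture (interior lower bound, truncation competitors, then a De~Giorgi--Carriero--Leaci decay argument for the lower density, and the standard covering argument for $\Hn(\overline{J_u}\setminus J_u)=0$) matches the paper's, and stages (ii)--(iii) are essentially the paper's Propositions on density estimates. The genuine gap is at the very first step of (i): the claim that the strong maximum principle gives $\min_{\overline\Omega}u=\min_{\partial\Omega}u>0$. The minimum principle only tells you that the minimum of the superharmonic function $u$ over $\overline\Omega$ is attained on $\partial\Omega$; it says nothing about the positivity of the boundary trace $\gamma^-_{\partial\Omega}(u)$, which is determined by the free boundary problem and is a priori only known to be $\ge 0$ (indeed, comparison with the Dirichlet solution $u_0$ of $-\Delta u_0=f$, $u_0=0$ on $\partial\Omega$, gives $u\ge u_0$, but $\inf_\Omega u_0=0$). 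Establishing $u\ge\delta$ $\Ln$-a.e.\ in $\Omega$ is precisely the hard, variational part of the paper's argument: one compares $u$ with $u_t=\max\{u,t\}$ extended by $u$ outside $\Omega$, pays for the new jump created on $\partial\Omega\cap\partial^*U_t$ (where $U_t=\{u<t\}\cap\Omega$), controls that cost by $t^2\Hn(\partial^*U_t\cap\partial\Omega)\le Ct^2P(U_t;\Omega)$ via the boundary trace inequality of \autoref{teor: isopint}, combines this with the barrier/Hopf-lemma estimate $\Ln(U_t)\le Ct$ of \autoref{lemma: stimasottolivelli}, and closes a differential inequality for $g(t)=\int_{U_t}u\abs{\nabla u}\,d\Ln$ via \autoref{lemma: diffineq}. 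None of this is replaced by the maximum principle, and your proposal omits all of it.

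This gap propagates: your exterior truncation $v_\delta=u\chi_{\{u>\delta\}}$ is only a usable competitor once you already know $\Omega\subseteq\{u>\delta\}$, since otherwise the truncation modifies $u$ inside $\Omega$, changes the source term $-2\int_\Omega fv$, and may create jumps crossing $\Omega$, so the clean energy comparison you write down does not hold. A secondary (fixable) imprecision in the same stage: the paper's differential inequality in the exterior step is not for $m(\delta)=\Ln(\{0<u\le\delta\}\setminus\Omega)$ but for $h(t)=\int_{\{u\le t\}\setminus J_u}u\abs{\nabla u}\,d\Ln$, and closing it requires the $\bv\hookrightarrow L^{n/(n-1)}$ embedding applied to $u^2\chi_{\{u\le t\}}$ together with the a priori bound $h\le K$; a differential inequality for the measure alone is not obviously self-contained. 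The remaining stages are sound: the upper density bound via the competitor $u\chi_{\R^n\setminus B_r(x)}$ is exactly the paper's argument (note that for $0<r<d(x,\partial\Omega)$ the ball does not meet $\Omega$, so the $f$-term you carry is actually absent), and for the lower density the paper likewise defers to the Mumford--Shah quasi-minimizer machinery, after first establishing the volume density $\Ln(B_r(x)\cap\{u>0\})\ge cr^n$, an intermediate step you should include.
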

We refer to \autoref{notations} for the definitions of $\lambda_\beta(B)$ in \eqref{eq:cond n=2}, and the distance $d(x,\partial\Omega)$ in \autoref{teor: mainth2}. 

\autoref{existence} is devoted to the proof \autoref{teor: mainth1}, while \autoref{estimates} is devoted to the proof \autoref{teor: mainth2}.

We notice that the assumptions on the function $f$ do not seem to be sharp. Indeed, it is well known that (see for instance \cite[Theorem 8.15]{trudinger}), in the more regular case, the assumption $f\in L^p(\Omega)$ with $p>n/2$ ensures the boundedness of solutions to equation~\eqref{eq: el}.%

\section{Notation and tools}\label{notations}
In this section we recall some definitions and proprieties of the spaces $\bv$, $\sbv$, and $\sbv^{\frac{1}{2}}$. We refer to \cite{bv}, \cite{robin-bg}, \cite{evans} 
for a deep study of the properties of these functions.

In the following, given an open set $\Omega\subseteq\R^n$ and $1\le p\le\infty$, we will denote the $L^p(\Omega)$ norm of a function $v\in L^p(\Omega)$ as $\norma{v}_{p,\Omega}$, in particular when $\Omega=\R^n$ we will simply write $\norma{v}_{p}=\norma{v}_{p,\R^n}$.

\begin{defi}[$\bv$]
Let $u\in L^1(\R^n)$. We say that $u$ is a function of \emph{bounded variation} in $\R^n$ and we write $u\in\bv(\R^n)$ if its distributional derivative is  a Radon measure, namely
\[
\int_{\Omega}u\,\frac{\partial\varphi}{\partial x_i}=\int_{\Omega}\varphi\, d D_i u\qquad \forall \varphi\in C^\infty_c(\R^n),
\]
with $Du$ a $\R^n$-valued measure in $\R^n$. We denote with $\abs{Du}$ the total variation of the measure $Du$. The space $\bv(\R^n)$ is a Banach space equipped with the norm
\[
\norma{u}_{\bv(\R^n)}=\norma{u}_{1}+\abs{Du}(\R^n).
\]

\end{defi}
\begin{defi}
Let $E\subseteq\R^n$ be a measurable set. We define the \emph{set of points  of density 1 for $E$} as 
\[
E^{(1)}=\Set{x\in\R^n | \lim_{r\to0^+}\dfrac{\Ln(B_r(x)\cap E)}{\Ln(B_r(x))}=1},
\]
and the \emph{set of points of density 0 for $E$} as 
\[
E^{(0)}=\Set{x\in\R^n | \lim_{r\to0^+}\dfrac{\Ln(B_r(x)\cap E)}{\Ln(B_r(x))}=0}.
\]
Moreover, we define the \emph{essential boundary} of $E$ as
\[
\partial^*E=\R^n \setminus(E^{(0)}\cup E^{(1)}).
\]
\end{defi}
\begin{defi}[Approximate upper and lower limits]
Let $u\colon\R^n\to\R$ be a measurable function. We define the \emph{approximate upper and lower limits} of $u$, respectively, as
\[\overline{u}(x)=\inf\Set{t\in\R|\limsup_{r\to0^+}\dfrac{\Ln(B_r(x)\cap\set{u>t})}{\Ln(B_r(x))}=0},\]
and
\[\underline{u}(x)=\sup\Set{t\in\R|\limsup_{r\to0^+}\dfrac{\Ln(B_r(x)\cap\set{u<t})}{\Ln(B_r(x))}=0}.\]
We define the \emph{jump set} of $u$ as 
\[J_u=\Set{x\in\R^n|\underline{u}(x)<\overline{u}(x)}.\] 
We denote by $K_u$ the closure of $J_u$. 
\end{defi} 
If $\overline{u}(x)=\underline{u}(x)=l$, we say that $l$ is the approximate limit of $u$ as $y$ tends to $x$, and we have that, for any $\eps>0$, 
\[\limsup_{r\to0^+}\dfrac{\Ln(B_r(x)\cap\set{\abs{u-l}\geq\eps)}}{\Ln(B_r(x))}=0.\]

If $u\in\bv(\R^n)$, the jump set $J_u$ is a $(n-1)$-rectifiable set, i.e. ${J_u\subseteq\bigcup_{i\in\mathbb{N}}M_i}$, up to a $\Hn$-negligible set, with $M_i$ a $C^1$-hypersurface in $\R^n$ for every $i$. We can then define $\Hn$-almost everywhere on $J_u$ a normal $\nu_u$ coinciding with the normal to the hypersurfaces $M_i$. Futhermore, the direction of $\nu_u(x)$ is chosen in such a way that the approximate upper and lower limits of $u$ coincide with the approximate limit of $u$ on the half-planes
\[H^+_{\nu_u}=\set{y\in\R^n|\nu_u(x)\cdot(y-x)\geq0}\]
and
\[H^-_{\nu_u}=\set{y\in\R^n|\nu_u(x)\cdot(y-x)\leq0}\]
respectively.
\begin{defi}
Let $E\subseteq\R^n$ be a measurable set and let $\Omega\subseteq\R^n$ be an open set. We define the \emph{relative perimeter} of $E$ inside $\Omega$ as
\[
P(E;\Omega)=\sup\Set{\int_E \divv\varphi\,d\Ln | \begin{aligned}
\varphi\in &\:C^1_c(\Omega,\R^n) \\ &\abs{\varphi}\le 1
\end{aligned}}.
\]
If $P(E;\R^n)<+\infty$ we say that $E$ is a \emph{set of finite perimeter}.
\end{defi}
\begin{teor}[Relative Isoperimetric Inequality]
\label{teor: relisop}
Let $\Omega$ be an open, bounded, connected set with Lipschitz boundary. Then there exists a positive constants $C=C(\Omega)$ such that %
\[
\min\Set{\Ln(\Omega\cap E),\Ln(\Omega\setminus E)}^\frac{n-1}{n}\le C P(E;\Omega),
\]
for every set $E$ of finite perimeter.
\end{teor}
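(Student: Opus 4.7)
The plan is to reduce the statement to the Sobolev--Poincaré inequality in $\bv$ on $\Omega$. First I would recall that a set $E$ of finite perimeter has $\chi_E\in\bv(\R^n)$ with $P(E;\Omega)=\abs{D\chi_E}(\Omega)$, so the right-hand side is finite and the computation can be carried out in $\bv$. Then I would invoke the following standard fact: since $\Omega$ is open, bounded, connected and Lipschitz, there exists $C_1=C_1(\Omega)$ such that for every $u\in\bv(\Omega)$,
\[
\left(\int_\Omega \abs{u-u_\Omega}^{n/(n-1)}\,d\Ln\right)^{(n-1)/n}\le C_1\,\abs{Du}(\Omega),
\]
where $u_\Omega=\Ln(\Omega)^{-1}\int_\Omega u\,d\Ln$. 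This is obtained by approximating $\bv$ functions by smooth ones, applying the standard Sobolev--Poincaré inequality on $W^{1,1}(\Omega)$, and passing to the limit via the lower semicontinuity of the total variation.

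I would then apply this inequality to $u=\chi_E$. Writing $a=\Ln(E\cap\Omega)$ and $b=\Ln(\Omega\setminus E)$, the mean is $u_\Omega=a/(a+b)$, and a direct computation gives
\[
\int_\Omega\abs{\chi_E-u_\Omega}^{n/(n-1)}\,d\Ln=\frac{ab\bigl(a^{1/(n-1)}+b^{1/(n-1)}\bigr)}{(a+b)^{n/(n-1)}}.
\]
Assuming without loss of generality $a\le b$, I retain only the $b^{1/(n-1)}$ term in the bracket and use $b/(a+b)\ge 1/2$ to obtain
\[
\int_\Omega\abs{\chi_E-u_\Omega}^{n/(n-1)}\,d\Ln\ge a\left(\frac{b}{a+b}\right)^{n/(n-1)}\ge 2^{-n/(n-1)}\min\set{a,b}.
\]
Raising this to the $(n-1)/n$-th power and combining with the $\bv$-Poincaré inequality yields
\[
\min\set{\Ln(E\cap\Omega),\,\Ln(\Omega\setminus E)}^{(n-1)/n}\le 2\,C_1\,P(E;\Omega),
\]
which is the claim with $C=2\,C_1$.

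The only genuine difficulty is the $\bv$-Poincaré inequality on a general Lipschitz domain; it is where the dependence of the constant on $\Omega$ is encoded. In practice one proves it either through a covering argument based on Poincaré inequalities on cubes together with the connectedness of $\Omega$, or by means of a bounded extension operator $\bv(\Omega)\to\bv(\R^n)$ followed by the classical Sobolev embedding $\bv(\R^n)\hookrightarrow L^{n/(n-1)}(\R^n)$. Once this analytic ingredient is granted, the rest of the proof is just algebraic bookkeeping on the scalar quantities $\Ln(E\cap\Omega)$ and $\Ln(\Omega\setminus E)$.
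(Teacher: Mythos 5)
Your argument is correct: the computation of $\int_\Omega\abs{\chi_E-u_\Omega}^{n/(n-1)}\,d\Ln$ checks out, and the lower bound $2^{-n/(n-1)}\min\set{a,b}$ together with the $\bv$ Sobolev--Poincar\'e inequality gives exactly the stated estimate with $C=2C_1$. The paper offers no proof of this theorem at all --- it simply refers to Maz'ya --- so there is nothing to diverge from; your reduction to the Poincar\'e inequality for $\bv(\Omega)$ on a bounded connected Lipschitz domain is the standard route, and that remaining ingredient (via smooth strict approximation plus the $W^{1,1}$ Poincar\'e inequality, or via a bounded extension operator) is classical and correctly identified by you as the only place where the geometry of $\Omega$ enters the constant.
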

See for instance \cite{mazya} for the proof of this theorem.
\begin{teor}
\label{teor: isopint}
Let $\Omega$ be an open, bounded, connected set with Lipschitz boundary. Then there exists a constant $C=C(\Omega)>0$ such that
\[
\Hn (\partial^*E\cap \partial\Omega )\le C \Hn(\partial^* E\cap \Omega) 
\]
for every set of finite perimeter $E\subset\Omega$ with $0<\Ln(E)\le \Ln(\Omega)/2$.
\end{teor}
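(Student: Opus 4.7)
The plan is to combine the Gauss--Green formula for sets of finite perimeter with a Lipschitz vector field uniformly transverse to $\partial\Omega$, and then to absorb the resulting volume term through the relative isoperimetric inequality \autoref{teor: relisop}.

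First I would construct a Lipschitz vector field $X\colon\overline\Omega\to\R^n$, with $\norma{X}_\infty$ and $\norma{\divv X}_\infty$ depending only on $\Omega$, such that $X\cdot\nu_\Omega\ge 1$ at $\Hn$-a.e.\ point of $\partial\Omega$, where $\nu_\Omega$ is the outer unit normal. This is standard for Lipschitz domains: cover $\partial\Omega$ by finitely many open sets in which $\partial\Omega$ is a Lipschitz graph, on each chart take $X$ to be a constant vector aligned with the outward coordinate direction, and glue via a smooth partition of unity.

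Next I would apply Gauss--Green to $E\subset\Omega$:
\[
\int_E \divv X\,d\Ln=\int_{\partial^*E}X\cdot\nu_E\,d\Hn,
\]
where $\nu_E$ is the measure-theoretic outer normal to $E$. The key geometric observation is that $\nu_E=\nu_\Omega$ at $\Hn$-a.e.\ point of $\partial^*E\cap\partial\Omega$: since $E\subset\Omega$, the blow-up of $E$ at such a point is a half-space contained in the blow-up half-space of $\Omega$, and a strict containment between two half-spaces through the origin is impossible. Splitting the right-hand side into its parts inside $\Omega$ and on $\partial\Omega$, using $X\cdot\nu_\Omega\ge 1$ on the latter and $\abs{X\cdot\nu_E}\le\norma{X}_\infty$ on the former, I obtain
\[
\Hn(\partial^*E\cap\partial\Omega)\le \norma{\divv X}_\infty\,\Ln(E)+\norma{X}_\infty\,P(E;\Omega).
\]

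To finish, I would absorb the volume term via \autoref{teor: relisop}. The hypothesis $\Ln(E)\le\Ln(\Omega)/2$ yields $\Ln(E)\le C\,P(E;\Omega)^{n/(n-1)}$ with $C=C(\Omega)$. A short dichotomy then gives the desired linear bound: if $P(E;\Omega)\le 1$, the superlinear exponent is harmless because $P^{n/(n-1)}\le P$; if instead $P(E;\Omega)>1$, the trivial estimate $\Hn(\partial^*E\cap\partial\Omega)\le\Hn(\partial\Omega)\le\Hn(\partial\Omega)\,P(E;\Omega)$ already suffices. The main technical point to verify carefully is the identification $\nu_E=\nu_\Omega$ on $\partial^*E\cap\partial\Omega$; the rest is a routine combination of the divergence theorem, the isoperimetric inequality, and a case split.
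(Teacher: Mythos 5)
Your proof is correct, but be aware that the paper does not actually prove this statement: it is imported verbatim from \cite[Theorem 2.3]{cianchi2016poincare}, together with the remark that Lipschitz sets are ``admissible'' in the sense used there. So yours is a self-contained alternative rather than a reconstruction, and all three of its ingredients are sound. The transversal field exists for a bounded Lipschitz domain by the finite graph-chart cover you describe (the uniform Lipschitz constant of the charts gives $e\cdot\nu_\Omega\ge (1+L^2)^{-1/2}$ for the vertical direction $e$, so a fixed dilation achieves $X\cdot\nu_\Omega\ge1$), and since $X$ is built from constants and a smooth partition of unity it is smooth, so Gauss--Green for sets of finite perimeter applies directly once you note that $E$ has finite perimeter in all of $\R^n$ (because $\partial^*E\setminus\Omega\subseteq\partial\Omega$ has finite $\Hn$-measure). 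The identification $\nu_E=\nu_\Omega$ at $\Hn$-a.e.\ point of $\partial^*E\cap\partial\Omega$ is exactly the standard locality-of-blow-ups lemma for nested sets of finite perimeter; it applies because $\Hn$-almost every point of a Lipschitz boundary lies in $\partial^*\Omega$ (Rademacher). Your final dichotomy is fine, though it can be avoided: writing $\Ln(E)=\Ln(E)^{1/n}\Ln(E)^{(n-1)/n}\le\Ln(\Omega)^{1/n}\,C\,P(E;\Omega)$ via \autoref{teor: relisop} and the hypothesis $\Ln(E)\le\Ln(\Omega)/2$ gives the linear bound with no case split. As for what each route buys: the citation covers a broader class of domains and ties the constant to the isoperimetric profile, whereas your argument is elementary, self-contained, and makes the dependence of $C$ on the Lipschitz character of $\partial\Omega$ completely explicit.
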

We refer to \cite[Theorem 2.3]{cianchi2016poincare} for the proof of the previous theorem, observing that if $\Omega$ is a Lipschitz set, then it is an admissible set in the sense defined in \cite{cianchi2016poincare}(see \cite[Remark 5.10.2]{ziemer}).
\begin{teor}[Decomposition of $\bv$ functions]
Let $u\in\bv(\R^n)$. Then we have
\[
dDu=\nabla u\,d\Ln+\abs{\overline{u}-\underline{u}}\nu_u\,d\Hn\lfloor_{{\Huge J_u}}+ dD^c u,
\]
where $\nabla u$ is the density of $Du$ with respect to the Lebesgue measure, $\nu_u$ is the normal to the jump set $J_u$ and $D^c u$ is the \emph{Cantor part} of the measure $Du$. The measure $D^c u$ is singular with respect to the Lebesgue measure and concentrated out of $J_u$.
\end{teor}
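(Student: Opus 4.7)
The plan is to perform a Lebesgue-type decomposition of the $\R^n$-valued Radon measure $Du$ and then dissect its singular part using the geometric structure of $\bv$ functions. First, by the Radon--Nikodym theorem applied to $Du$ with respect to $\Ln$, I split $Du = Du^{ac} + Du^s$ with $Du^{ac}\ll\Ln$ and $Du^s\perp\Ln$, and I call the density of $Du^{ac}$ the approximate gradient $\nabla u$; the consistency of this measure-theoretic definition with the pointwise one follows from the Lebesgue differentiation theorem applied $\Ln$-a.e.

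The heart of the proof lies in splitting $Du^s$ into a jump piece concentrated on $J_u$ and a residual Cantor piece. I would invoke the coarea formula for $\bv$ functions,
\[
\abs{Du}(B) = \int_{-\infty}^{+\infty} P(\set{u>t};B)\, dt,
\]
which reduces the study of $Du$ to that of the superlevel sets $\set{u>t}$. Each such set has finite perimeter for almost every $t$, so De Giorgi's structure theorem yields that $\partial^*\set{u>t}$ is countably $(n-1)$-rectifiable, that $\abs{D\chi_{\set{u>t}}}=\Hn\lfloor_{\partial^*\set{u>t}}$, and that there is a measure-theoretic inner normal $\Hn$-a.e.\ on it. A Fubini argument in the coarea formula, combined with the very definitions of $\overline{u}$, $\underline{u}$, and $\nu_u$, then shows that, up to $\Hn$-null sets,
\[
J_u = \bigcup_{q\in\mathbb{Q}} \partial^*\set{u>q},
\]
which gives the countable $(n-1)$-rectifiability of $J_u$ and promotes $\nu_u$ to a well-defined normal $\Hn$-a.e.\ on $J_u$.

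Next, at $\Hn$-a.e.\ point $x\in J_u$ I would perform a blow-up: rescaling $u$ around $x$ and exploiting compactness in $\bv_{\loc}$, the limit is the two-valued function equal to $\overline{u}(x)$ on $H^+_{\nu_u(x)}$ and $\underline{u}(x)$ on $H^-_{\nu_u(x)}$. Feeding this blow-up back into the coarea identity identifies the jump part,
\[
Du^s\lfloor_{J_u} = (\overline{u}-\underline{u})\nu_u\, d\Hn\lfloor_{J_u}.
\]
The Cantor part is then defined residually by $D^c u := Du^s\lfloor_{\R^n\setminus J_u}$; it is singular with respect to $\Ln$ by construction and vanishes on $J_u$ by definition, which gives the stated decomposition.

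The main obstacle is the combination of the rectifiability of $J_u$ with the blow-up identification of the jump density: both rely on De Giorgi's structure theorem for sets of finite perimeter and on the coarea formula in $\bv$, which together form a substantial body of theory that cannot be reproduced in a short argument. For this reason, rather than reprove these facts here, I would adopt the standard route and refer directly to the decomposition theorem as stated in \cite{bv}.
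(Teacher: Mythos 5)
Your outline of the classical argument (Lebesgue decomposition of $Du$, coarea formula, De Giorgi's structure theorem for the superlevel sets, blow-up at $\Hn$-a.e.\ point of $J_u$, and residual definition of $D^cu$) is the standard proof and is accurate as a sketch; the paper itself offers no proof of this theorem, recalling it as background and citing \cite{bv}, which is exactly where you also end up deferring the heavy machinery. So your proposal is consistent with the paper's treatment, and no gap needs to be addressed.
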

\begin{defi}
Let $v\in\bv(\R^n)$, let $\Gamma\subseteq\R^n$ be a $\Hn$-rectificable set and let $\nu(x)$ be the generalized normal to $\Gamma$ defined for $\Hn$-a.e. $x\in\Gamma$. For $\Hn$-a.e. $x\in \Gamma$ we define the traces $\gamma_\Gamma^{\pm}(v)(x)$ of $v$ on $\Gamma$ by the following Lebesgue-type limit quotient relation
\[
\lim_{r\to 0}\frac{1}{r^n}\int_{B_r^{\pm}(x)}\abs{v(y)-\gamma_\Gamma^{\pm}(v)(x)}\,d\Ln(y)=0,
\]
where
\[
B_{r}^{+}(x)=\set{y\in B_r(x) | \nu(x)\cdot(y-x)>0},
\]
\[
B_{r}^{-}(x)=\set{y\in B_r(x) | \nu(x)\cdot(y-x)<0}.
\]\end{defi}
\begin{oss}
Notice that, by~\cite[Remark 3.79]{bv}, %
for $\Hn$-a.e. $x\in\Gamma$, $(\gamma_\Gamma^{+}(v)(x),\gamma_\Gamma^-(v)(x))$ coincides with either  $(\overline{v}(x),\underline{v}(x))$ or $(\underline{v}(x),\overline{v}(x))$, while, for $\Hn$-a.e. $x\in \Gamma\setminus J_v$, we have that $\gamma_\Gamma^+(v)(x)=\gamma_\Gamma^-(v)(x)$ and they coincide with the approximate limit of $v$ in $x$. In particular, if $\Gamma=J_v$, we have
\[
\gamma_{J_v}^+(v)(x)=\overline{v}(x) \qquad \gamma_{J_v}^-(v)(x)=\underline{v}(x)
\]
for $\Hn$-a.e. $x\in J_v$. 

\end{oss}
We now focus our attention on the $\bv$ functions whose Cantor parts vanish.
\begin{defi}[$\sbv$]
Let $u\in\bv(\R^n)$. We say that $u$ is a \emph{special function of bounded variation} and we write $u\in\sbv(\R^n)$ if $D^c u=0$. %
\end{defi}
For $\sbv$ functions we have the following.
\begin{teor}[Chain rule]\label{teor: chain}
Let $g\colon\R\to\R$ be a differentiable function. Then if $u\in\sbv(\R^n)$, we have
\[\nabla g(u)=g'(u)\nabla u.\]
Furthermore, if $g$ is increasing,
\[\overline{g(u)}=g(\overline{u}),\quad \underline{g(u)}=g(\underline{u})\]
 while, if $g$ is decreasing,
\[\overline{g(u)}=g(\underline{u}),\quad \underline{g(u)}=g(\overline{u}).\]
\end{teor}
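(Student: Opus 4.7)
The plan is to deduce the statement from the classical Vol'pert chain rule for $\bv$ functions, exploiting the hypothesis $D^c u = 0$ to eliminate the Cantor contribution. Since that chain rule is typically stated for Lipschitz $g$, I would first observe that modifying $g$ outside an interval containing the essential range of $u$ does not affect any term in the thesis; combining this with a localization-and-truncation argument (or simply using that in the target application $u$ is bounded by \autoref{teor: mainth1}), we may assume $g$ is Lipschitz.

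Then, applying the $\bv$ chain rule (see e.g.\ \cite[Theorem 3.96]{bv}) yields $g(u) \in \bv(\R^n)$ with the decomposition
\[
Dg(u) = g'(u) \nabla u \, d\Ln + \bigl(g(\overline{u}) - g(\underline{u})\bigr) \nu_u \, d\Hn \lfloor_{J_u} + g'(\widetilde u) \, dD^c u,
\]
where $\widetilde u$ is the precise representative of $u$ off $J_u$. Since $u \in \sbv(\R^n)$ satisfies $D^c u = 0$, the last summand vanishes; hence $g(u) \in \sbv(\R^n)$, and comparing with the canonical decomposition of $Dg(u)$ yields $\nabla g(u) = g'(u) \nabla u$ $\Ln$-a.e., proving the first assertion.

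For the identities on the approximate upper and lower limits I would argue directly from the definitions, treating the increasing case (the decreasing case reduces to it by applying the result to $-g$). Fix $x\in\R^n$ and set $\ell = \overline{u}(x)$. For $t > g(\ell)$, continuity of $g$ provides $s > \ell$ with $g(s) < t$, and monotonicity of $g$ yields $\set{g(u) > t} \subseteq \set{u > s}$; since $s > \overline{u}(x)$, the right-hand set has density zero at $x$, so $\overline{g(u)}(x) \leq t$, and letting $t \downarrow g(\ell)$ gives $\overline{g(u)}(x) \leq g(\overline{u}(x))$. The reverse inequality is symmetric: for $t < g(\ell)$, pick $s < \ell$ with $g(s) > t$ and use $\set{u > s} \subseteq \set{g(u) > t}$ together with the fact that $\set{u > s}$ does not have density zero at $x$. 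The argument for $\underline{g(u)}$ is analogous, working with sub-level sets and the definition of the lower limit.

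The main delicacy lies in this last step, where one must handle possible flat pieces of $g$ (which is only non-decreasing, not necessarily strictly increasing): working with the set inclusions above, supplied purely by continuity of $g$, rather than requiring invertibility, handles this uniformly. Every other assertion is an immediate consequence of the $\bv$ chain rule once $D^c u = 0$ is invoked.
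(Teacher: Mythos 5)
The paper does not actually prove this statement: it is recalled in \autoref{notations} as a known tool, with the reader referred to the cited monographs on $\bv$ and $\sbv$ functions. Your argument is therefore not an alternative to a proof in the paper but a reconstruction of the standard one, and it is essentially correct. The Vol'pert chain rule together with $D^cu=0$ kills the Cantor term and identifies the absolutely continuous part as $g'(u)\nabla u$; and your direct computation with super-level sets, using only continuity and monotonicity of $g$ (so that flat pieces of a non-decreasing $g$ cause no trouble), correctly yields $\overline{g(u)}=g(\overline u)$ and $\underline{g(u)}=g(\underline u)$, with the decreasing case following by applying the result to $-g$ and using $\overline{-h}=-\underline{h}$.

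Two small caveats, both attributable to the looseness of the statement rather than to your argument. First, to apply the $\bv$ chain rule globally on $\R^n$ one needs $g(u)\in L^1(\R^n)$, which forces the normalization $g(0)=0$ (or a purely local reading of the conclusion); your modification of $g$ outside the essential range of $u$ should be accompanied by this normalization. Second, that reduction to a Lipschitz $g$ only works when $u$ is essentially bounded; for arbitrary $u\in\sbv(\R^n)$ and arbitrary differentiable $g$ the reduction is incomplete, although this is harmless here because the theorem is only ever invoked for bounded functions (e.g.\ through the definition of $\sbvv$ and the truncations used in \autoref{teor: euler-lagrange} and \autoref{teor: linftybound}). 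Finally, your pointwise argument tacitly assumes $\overline u(x)$ and $\underline u(x)$ are finite; this fails only on an $\Hn$-negligible set and so does not affect any use of the identities.
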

We now give the definition of the following class of functions.
\begin{defi}[$\sbv^{1/2}$]
Let $u\in L^2(\R^n)$ be a non-negative function. We say that $u\in\sbvv$ if $u^2\in \sbv(\R^n)$. In addition, we define
\begin{gather*}
J_u:=J_{u^2} \qquad\quad \overline{u}:=\sqrt{\,\overline{u^2}\,}\qquad\quad \underline{u}:=\sqrt{\,\underline{u^2}\,} \\[5 pt]
\nabla u:=\frac{1}{2u}\nabla(u^2)\chi_{\set{u>0}}
\end{gather*}
\end{defi}
Notice that this definition extends the validity of the Chain Rule to the functions in $\sbvv$. We refer to \cite[Lemma 3.2]{robin-bg} for the coherence of this definition.
\begin{teor}[Compactness in $\sbv^{1/2}$]
\label{teor: sbv}
Let $u_k$ be a sequence in $\sbvv$ and let $C>0$ be such that for every $k\in \N$
\[
\int_{\R^n}\abs{\nabla u_k}^2\, d\Ln+\int_{J_{u_{k}}}\left(\overline u_k^2+\underline u_{k}^2\right)\,d\Hn +\int_{\R^n}u_k^2\, d\Ln < C
\]
Then there exists $u\in\sbvv$ and a subsequence $u_{k_j}$ such that
\begin{itemize}
    \item \emph{Compactness:}
    \[
    u_{k_j}\xrightarrow{L^2_{\loc}(\R^n)} u
    \]
    \item \emph{Lower semicontinuity:} for every open set $\Omega$ we have
    \[
    \int_\Omega \abs{\nabla u}^2\, d\Ln  \le \liminf_{j\to+\infty}\int_\Omega \abs{\nabla u_{k_j}}^2\,d\Ln
    \]
    and
    \[
    \int_{J_u\cap \Omega}\left( \overline u^2+\underline u^2\right)\, d\Hn  \le \liminf_{j\to+\infty}\int_{J_{u_{k_j}}\cap \Omega}\left( \overline u_{k_j}^2+\underline u_{k_j}^2\right)\, d\Hn
    \]
\end{itemize}
\end{teor}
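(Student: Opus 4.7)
The natural approach is to reduce to Ambrosio's classical compactness and lower semicontinuity theorems in $\sbv$, applied to the squares $v_k := u_k^2 \in \sbv(\R^n)$ (which belong to $\sbv$ by the very definition of $\sbvv$), and then to pass to the square roots at the end.

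First, the hypothesis and Cauchy--Schwarz give uniform $\bv$-bounds on $v_k$: one has $\|v_k\|_1 = \|u_k\|_2^2 \leq C$, $\|\nabla v_k\|_1 = 2\|u_k \nabla u_k\|_1 \leq 2\|u_k\|_2\|\nabla u_k\|_2 \leq 2C$, and $\int_{J_{v_k}} |\overline{v_k} - \underline{v_k}|\,d\Hn \leq \int_{J_{u_k}}(\overline{u_k}^2 + \underline{u_k}^2)\,d\Hn \leq C$. By standard $\bv$ compactness, along a subsequence $v_{k_j} \to v$ in $L^1_{\loc}(\R^n)$ for some non-negative $v \in \bv(\R^n)$. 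Setting $u := \sqrt{v}$ and using the elementary inequality $(\sqrt a - \sqrt b)^2 \leq |a-b|$ for $a,b\geq 0$, I get
\[
\int_K (u_{k_j} - u)^2\,d\Ln \leq \int_K |v_{k_j} - v|\,d\Ln \to 0
\]
for every compact $K \subset \R^n$, which is the asserted $L^2_{\loc}$ compactness.

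The delicate step is to upgrade $v$ from $\bv$ to $\sbv$, so that $u \in \sbvv$. Ambrosio's $\sbv$ closure theorem is not directly applicable to $\{v_k\}$, since the hypothesis only delivers $L^1$ control on $\nabla v_k$ and linear, rather than superlinear-at-zero, growth on the jump integrand. I would overcome this by a two-sided truncation: for $0 < c < M$, set $w_k^{c,M} := (u_k \vee c) \wedge M = \phi(v_k)$ with $\phi(t) = (\sqrt{t} \vee c) \wedge M$ Lipschitz and non-decreasing, so that by \autoref{teor: chain} $w_k^{c,M} \in \sbv(\R^n)$, $c \leq w_k^{c,M} \leq M$, $|\nabla w_k^{c,M}| \leq |\nabla u_k|\chi_{\{c < u_k < M\}}$, and $J_{w_k^{c,M}} \subseteq J_{u_k}$. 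Since $\underline{w_k^{c,M}} \geq c$ on the jump set, $\overline{w_k^{c,M}}^2 + \underline{w_k^{c,M}}^2 \geq 2c^2$ there, which forces
\[
\Hn(J_{w_k^{c,M}}) \leq \frac{1}{2c^2}\int_{J_{u_k}}(\overline{u_k}^2 + \underline{u_k}^2)\,d\Hn \leq \frac{C}{2c^2}.
\]
Combined with the $L^\infty$ bound $M$ and $\int|\nabla w_k^{c,M}|^2 \leq C$, this places $w_k^{c,M}$ into the scope of the classical Ambrosio $\sbv$ compactness and lower semicontinuity results: a further subsequence converges in $L^1_{\loc}$ to $(u\vee c)\wedge M \in \sbv(\R^n)$, and the semicontinuity inequalities hold for the truncated sequences.

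A diagonal selection in $c \downarrow 0$, $M \uparrow +\infty$ then upgrades $u^2 = v \in \sbv(\R^n)$, i.e.\ $u \in \sbvv$. The two lower-semicontinuity inequalities on an open set $\Omega$ follow by applying Ambrosio's $\sbv$ lower semicontinuity on each truncated sequence $w_k^{c,M} \to (u\vee c)\wedge M$, using $|\nabla w_k^{c,M}| \leq |\nabla u_k|\chi_\Omega$ pointwise and $\overline{w_k^{c,M}}^2 + \underline{w_k^{c,M}}^2 \leq \overline{u_k}^2 + \underline{u_k}^2$ on the subset of $J_{w_k^{c,M}}$ where $\underline{u_k} \geq c$, and then passing to the limit in $c$ and $M$ by monotone convergence. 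The main obstacle is precisely this last limit passage in the jump integral: on the complementary subset $\{\underline{u_k} < c\} \cap J_{w_k^{c,M}}$ the truncation introduces a spurious $c^2$ contribution, which must be absorbed using the quantitative bound $\Hn(J_{w_k^{c,M}}) \leq C/(2c^2)$ and a judicious choice of $c \to 0$ along a subsequence before sending $M \to +\infty$.
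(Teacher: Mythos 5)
The paper does not actually prove this theorem: it is quoted as a tool from \cite{robin-bg}, where the argument is essentially the one you outline (pass to $v_k=u_k^2$ to get a $\bv$ bound and an $L^1_{\loc}$ limit $v$, recover $L^2_{\loc}$ convergence of $u_k$ to $u=\sqrt{v}$ via $(\sqrt a-\sqrt b)^2\le\abs{a-b}$, then truncate to enter the scope of Ambrosio's compactness and lower semicontinuity theorems and diagonalize). Your first two steps are correct, modulo the harmless point that $u_k\vee c$ equals $c$ at infinity and so is not in $L^1(\R^n)$; one should subtract the constant or work on bounded open sets. Note also that for the jump term you need Ambrosio's lower semicontinuity theorem for jointly convex (BV-elliptic) surface integrands, not merely the closure theorem, since the integrand is $a^2+b^2$ rather than $\Hn(J)$; this is standard but worth citing.

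The genuine gap is the choice of lower truncation. Because $t\mapsto t\vee c$ \emph{raises} the lower trace, on $J_{w_k^{c,M}}\cap\set{\underline{u_k}<c}$ you pick up a spurious contribution $c^2-\underline{u_k}^2$ per unit $\Hn$-measure, so the best comparison available is
\[
\int_{J_{w_k^{c,M}}}\left(\overline{w_k^{c,M}}^2+\underline{w_k^{c,M}}^2\right)d\Hn\le\int_{J_{u_k}}\left(\overline{u_k}^2+\underline{u_k}^2\right)d\Hn+c^2\,\Hn\left(J_{w_k^{c,M}}\cap\set{\underline{u_k}<c}\right),
\]
and your own bound $\Hn(J_{w_k^{c,M}})\le C/c^2$ (which is sharp: take $u_k$ to be a sum of $k$ indicator functions of disjoint balls with height $1/\sqrt{k}$) only controls the error by $c^2\cdot C/c^2=C$, an $O(1)$ quantity uniformly in $k$. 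There is no ``judicious choice'' that rescues this: a fixed $c$ leaves an $O(1)$ error in the jump energy, while choosing $c=c_k\to0$ to kill the error destroys the uniform bound on $\Hn(J_{w_k^{c_k,M}})$ that Ambrosio's compactness requires, so the two requirements are genuinely in conflict within this truncation scheme. The fix is to use the subtractive truncation $(u_k-\eps)^+\wedge M$ instead: its jump set is still contained in $J_{u_k}\cap\set{\overline{u_k}>\eps}$, hence $\Hn\le C/\eps^2$, but both traces \emph{decrease}, so the jump energy of the truncation is bounded by $\int_{J_{u_k}}(\overline{u_k}^2+\underline{u_k}^2)\,d\Hn$ with no error term; lower semicontinuity for each fixed $\eps,M$ followed by monotone convergence as $\eps\downarrow0$ and $M\uparrow\infty$ then closes the argument exactly as you intend.
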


\begin{defi}[Robin Eigenvalue]
\label{def: robineigen}
Let $\Omega\subseteq\R^n$ be an open bounded set with Lipschitz boundary, let $\beta>0$. We define $\lambda_{\beta}(\Omega)$ as
\begin{equation}\label{eq:Robin 2,q}\lambda_{\beta}(\Omega)=\inf\Set{ \dfrac{\displaystyle\int_\Omega \abs{\nabla v}^2\,d\Ln+\beta\int_{\partial \Omega} v^2\,d\Hn}{\displaystyle\int_\Omega v^2\,d\Ln}| v\in W^{1,2}(\Omega)\setminus\set{0}}.\end{equation}
\end{defi}

\begin{oss}
Standard tools of calculus of variation ensures that the infimum in~\eqref{eq:Robin 2,q} is achieved, see for instance. 
\end{oss}

\begin{lemma}
\label{lemma: stimaautovalori}
For every $0<r<R$, the following inequality holds
\[
\lambda_{\beta}(B_r)\le \left(\dfrac{ \Ln(B_R)}{\Ln(B_r)}\right)^{\frac{2}{n}}\lambda_{\beta}(B_R),
\]
where $B_R$ and $B_r$ are balls with radii $R$ and $r$ respectively.
\begin{proof}
Let $\varphi$ be a minimum  of~\eqref{eq:Robin 2,q} for $\Omega=B_R$ and with $\norma{\varphi}_{2,B_R}=1$. We define
\[
w(x)=\varphi\left(\frac{R}{r}x\right) \qquad \forall x\in B_r.
\]
Therefore, 
\[
\begin{split}
\lambda_{\beta}(B_r)%
&\le \dfrac{\displaystyle \int_{B_r}\abs{\nabla w(x)}^2\,d\Ln(x) + \int_{\partial B_r}w(x)^2\, d\Hn(x)}{\displaystyle \int_{B_r}w(x)^2\, d\Ln(x)}\\[7 pt]%
&= \dfrac{\displaystyle \left(\frac{r}{R}\right)^{n-2}\int_{B_R}\abs{\nabla \varphi(y)}^2\,d\Ln(y) + \left(\frac{r}{R}\right)^{n-1}\int_{\partial B_R}\varphi(y)^2\, d\Hn(y)}{\left(\dfrac{r}{R}\right)^{n}}.
\end{split}
\]
Since $r/R<1$, %
by minimality of $\varphi$, we get 
\[
\lambda_{\beta}(B_r)\le \dfrac{\displaystyle\left(\frac{r}{R}\right)^{n-2}}{\displaystyle\left(\dfrac{r}{R}\right)^{{n}\hphantom{\!2}}}\lambda_{\beta}(B_R)=\left(\dfrac{\Ln(B_r)}{\Ln(B_R)}\right)^{-\frac{2}{n}}\lambda_{\beta}(B_R).
\]
\end{proof}
\end{lemma}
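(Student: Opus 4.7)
The plan is a direct rescaling argument: use a minimizer on the larger ball $B_R$ to build an admissible test function on the smaller ball $B_r$, and then keep careful track of how the three integrals transform. Precisely, take $\varphi\in W^{1,2}(B_R)$ attaining the infimum in~\eqref{eq:Robin 2,q} for $\Omega=B_R$, normalized so that $\norma{\varphi}_{2,B_R}=1$, and set $w(x)=\varphi(\tfrac{R}{r}x)$ for $x\in B_r$. This $w$ is in $W^{1,2}(B_r)\setminus\set{0}$, so it is admissible in the Rayleigh quotient defining $\lambda_\beta(B_r)$.

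The core step is the change of variables $y=(R/r)x$. The chain rule gives $\abs{\nabla w(x)}^2=(R/r)^2\abs{\nabla\varphi(y)}^2$, the Jacobian contributes $(r/R)^n$ in the interior, and the $(n-1)$-dimensional area factor $(r/R)^{n-1}$ appears on $\partial B_r$. Therefore
\begin{align*}
\int_{B_r}\abs{\nabla w}^2\,d\Ln &= \left(\tfrac{r}{R}\right)^{n-2}\int_{B_R}\abs{\nabla\varphi}^2\,d\Ln,\\
\int_{\partial B_r}w^2\,d\Hn &= \left(\tfrac{r}{R}\right)^{n-1}\int_{\partial B_R}\varphi^2\,d\Hn,\\
\int_{B_r}w^2\,d\Ln &= \left(\tfrac{r}{R}\right)^{n}\int_{B_R}\varphi^2\,d\Ln.
\end{align*}

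Inserting $w$ in the Rayleigh quotient and dividing by the denominator, the gradient contribution acquires a factor $(R/r)^2$ and the boundary contribution a factor $R/r$. To collapse these two different prefactors back to a single $\lambda_\beta(B_R)$, I would use that $R/r>1$ implies $R/r\le (R/r)^2$; since the boundary integral is non-negative, replacing $R/r$ by $(R/r)^2$ can only enlarge the upper bound. The numerator then reconstitutes, up to the common factor $(R/r)^2$, the Rayleigh quotient of $\varphi$ on $B_R$, which equals $\lambda_\beta(B_R)$ by minimality. Rewriting $(R/r)^2=(\Ln(B_R)/\Ln(B_r))^{2/n}$ gives the desired inequality.

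I do not expect any real obstacle: the only mildly subtle point is the mismatch between the interior scaling $(r/R)^{n-2}$ and the boundary scaling $(r/R)^{n-1}$, which is precisely handled by the hypothesis $r<R$. This same argument in fact works for any pair of homothetic Lipschitz domains, not only balls; the statement is specialised to balls because that is what the rest of the paper will need when invoking it together with \eqref{eq:cond n=2}.
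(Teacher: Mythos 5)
Your proposal is correct and follows essentially the same route as the paper: the same rescaled test function $w(x)=\varphi(\tfrac{R}{r}x)$, the same bookkeeping of the factors $(r/R)^{n-2}$, $(r/R)^{n-1}$, $(r/R)^n$, and the same use of $r/R<1$ to absorb the boundary term's weaker prefactor into the gradient term's before invoking minimality of $\varphi$.
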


Let $\beta,m>0$, and let us denote by
\[
\Lambda_{\beta,m}=\inf\Set{\dfrac{\displaystyle \int_{\R^n}\!\abs{\nabla v}^2\, d\Ln+\beta \int_{J_v}\!\!\left(\underline{v}^2+\overline{v}^2\right)\,d\Hn}{\displaystyle \int_{\R^n}v^2\, d\Ln} | \begin{aligned}
&v\in\sbvv\setminus{\{0\}} \\
&\Ln\left(\set{v>0}\right)\le m
\end{aligned}}.
\]
Here we state a theorem, referring to \cite[Theorem 5]{robin-bg} for the proof.
\begin{teor}%
\label{teor: faberkrahn}
Let $B\subseteq\R^n$ be a ball of volume $m$. Then
\[
\Lambda_{\beta,m}= \lambda_{\beta}(B).
\]
\end{teor}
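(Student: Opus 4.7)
The plan is to establish the two inequalities $\Lambda_{\beta,m}\le\lambda_\beta(B)$ and $\Lambda_{\beta,m}\ge\lambda_\beta(B)$ separately, writing $\mathcal R(v)$ throughout for the Rayleigh quotient appearing in the definition of $\Lambda_{\beta,m}$. The first inequality is obtained by testing: let $\varphi\in W^{1,2}(B)$ realize $\lambda_\beta(B)$ in~\eqref{eq:Robin 2,q} and extend it by zero to $\tilde\varphi\colon\R^n\to\R$. Because $\partial B$ is smooth, $\tilde\varphi\in\sbvv$ with $J_{\tilde\varphi}=\partial B$, $\underline{\tilde\varphi}=0$, and $\overline{\tilde\varphi}$ equal to the trace of $\varphi$; moreover $\Ln(\{\tilde\varphi>0\})=m$. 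A direct computation gives $\mathcal R(\tilde\varphi)=\lambda_\beta(B)$, hence $\Lambda_{\beta,m}\le\lambda_\beta(B)$.

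For the reverse inequality, given any admissible $v\in\sbvv$ with $\tilde m:=\Ln(\{v>0\})\le m$, I consider its spherical decreasing rearrangement $v^*$: the radial nonincreasing function on $\R^n$ with $\Ln(\{v^*>t\})=\Ln(\{v>t\})$ for every $t>0$. Then $\|v^*\|_2=\|v\|_2$ and $v^*$ is supported on a ball $B^*$ of volume $\tilde m$. The central step is an SBV P\'olya--Szeg\H{o} inequality,
\[
\int_{\R^n}\abs{\nabla v^*}^2\,d\Ln+\beta\int_{J_{v^*}}\bigl(\overline{v^*}^2+\underline{v^*}^2\bigr)\,d\Hn\le\int_{\R^n}\abs{\nabla v}^2\,d\Ln+\beta\int_{J_v}\bigl(\overline{v}^2+\underline{v}^2\bigr)\,d\Hn,
\]
proved by slicing along the super-level sets $E_t=\{v>t\}$: for a.e.\ $t>0$ one decomposes $\partial^*E_t$ as the union of the part disjoint from $J_v$ (which carries the Dirichlet energy via the coarea formula) and the part contained in $J_v$ (which carries the jump energy), compares with the sphere $\partial^*\{v^*>t\}$ through the isoperimetric inequality $P(E_t)\ge P(\{v^*>t\})$, and integrates in $t$.

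To pass from $B^*$ (of volume $\tilde m\le m$) to $B$ (of volume $m$), I rescale $v^*$ by $s=(m/\tilde m)^{1/n}\ge 1$, setting $w(y)=v^*(y/s)$ on the ball $sB^*$ of volume $m$. A change of variables shows
\[
\mathcal R(w)=s^{-2}\,\frac{\int_{\R^n}\abs{\nabla v^*}^2\,d\Ln}{\int_{\R^n}(v^*)^2\,d\Ln}+s^{-1}\beta\,\frac{\int_{J_{v^*}}\bigl(\overline{v^*}^2+\underline{v^*}^2\bigr)\,d\Hn}{\int_{\R^n}(v^*)^2\,d\Ln}\le \mathcal R(v^*),
\]
because $s\ge 1$. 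A further reduction, pushing any interior jumps of $w$ out to $\partial(sB^*)$ without increasing the energy, places $w$ in $W^{1,2}(sB^*)$, so that $\mathcal R(w)\ge\lambda_\beta(sB^*)=\lambda_\beta(B)$. Therefore $\mathcal R(v)\ge\lambda_\beta(B)$ for every admissible $v$, yielding $\Lambda_{\beta,m}\ge\lambda_\beta(B)$.

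The main obstacle is proving the SBV P\'olya--Szeg\H{o} inequality together with the jump-pushing reduction: both steps require a delicate level-set analysis that simultaneously controls the absolutely continuous part of $\nabla v$ (through the coarea formula) and the jump set $J_v$ (through its rectifiable structure and the isoperimetric inequality on super-level sets). The rescaling step then handles the case $\tilde m<m$, bypassing the fact that the classical Robin eigenvalue does not enjoy straightforward domain monotonicity. The full argument is carried out in~\cite[Theorem~5]{robin-bg}.
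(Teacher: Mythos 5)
The paper itself offers no proof of this statement---it is quoted directly from \cite[Theorem 5]{robin-bg}---so what must be assessed is whether your sketched strategy is viable. Your upper bound $\Lambda_{\beta,m}\le\lambda_\beta(B)$ (extend the Robin eigenfunction of $B$ by zero and compute) is correct. The lower bound, however, hinges on an ``SBV P\'olya--Szeg\H{o} inequality'' for the full Robin-type energy, and that inequality is false. The jump energy $\overline v^{\,2}+\underline v^{\,2}$ is not determined by the perimeters of the super-level sets alone: it depends on how upper and lower traces are paired along $J_v$, and spherical rearrangement re-pairs them unfavourably by nesting the level sets. Concretely, in $\R^2$ take $v=2\chi_{B_r(x_0)}+\chi_{B_r(x_1)}$ with $B_r(x_0)$, $B_r(x_1)$ disjoint. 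The energy is pure jump energy, equal to $\beta\left(4\cdot 2\pi r+1\cdot 2\pi r\right)=10\pi r\beta$, while $v^*=2\chi_{B_r}+\chi_{B_{\sqrt2\, r}\setminus B_r}$ has jump energy $\beta\left((4+1)\cdot 2\pi r+1\cdot 2\sqrt2\,\pi r\right)=(10+2\sqrt2)\pi r\beta$, which is strictly larger. This is exactly the classical obstruction that makes Schwarz symmetrization unusable for Faber--Krahn inequalities with Robin boundary conditions, already in the smooth setting of Bossel and Daners.

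For the same reason the concluding ``jump-pushing'' reduction (moving interior jumps of $w$ outward to $\partial(sB^*)$ without increasing the energy) is unjustified: pushing a jump sphere outward increases its $\Hn$-measure, and nothing controls the traces it carries. The actual proof in \cite{robin-bg} avoids rearrangement entirely: after a concentration-compactness step it adapts the Bossel--Daners level-set (dearrangement, or ``$H$-function'') method to the $\sbv^{1/2}$ setting, comparing a weighted average of $\abs{\nabla v}/v$ and of the traces over super-level sets of $v$ with the corresponding quantity for the ball, rather than transporting $v$ itself onto the ball. Your rescaling step handling $\Ln(\set{v>0})<m$ via the dilation factor $s\ge1$ is fine (it is the same computation as \autoref{lemma: stimaautovalori}), but the central step of your argument does not survive.
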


We will denote by $d(x,\partial\Omega)$ the distance between $x\in\R^n$ and the boundary  $\partial\Omega$, and for every $\eps>0$ we define
\[
\Omega_\eps=\Set{x\in\Omega | d(x,\partial\Omega)>\varepsilon}.
\]
We will use the following result.
\begin{prop}
\label{teor: volumedistanza}
Let $\Omega$ be an open bounded set with $C^{1,1}$ boundary, then there exist a constant %
$C=C(\Omega)>0$ and a $\eps_0=\eps_0(\Omega)>0$ such that
\[
\Ln(\Omega\setminus\Omega_\eps)\le C\varepsilon \qquad \qquad \forall \eps<\eps_0.
\]
\end{prop}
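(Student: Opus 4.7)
The plan is to exploit the tubular neighborhood structure available for a $C^{1,1}$ boundary. The set $\Omega\setminus\Omega_\eps=\set{x\in\Omega | d(x,\partial\Omega)\le\eps}$ is the one-sided inner tubular neighborhood of $\partial\Omega$ of thickness $\eps$, and the idea is to reduce its volume computation to a product structure on $\partial\Omega\times[0,\eps]$.

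First I would invoke the standard regularity of the distance function for $C^{1,1}$ domains: there exists $\eps_0=\eps_0(\Omega)>0$, depending only on a bound for the curvatures of $\partial\Omega$, such that $d(\cdot,\partial\Omega)$ is of class $C^{1,1}$ on $\Omega\setminus\Omega_{\eps_0}$ and the nearest-point projection $\pi\colon\Omega\setminus\Omega_{\eps_0}\to\partial\Omega$ is single-valued and Lipschitz. Consequently, denoting by $\nu_\Omega$ the outer unit normal to $\partial\Omega$, the map
\[
\Phi\colon\partial\Omega\times[0,\eps_0)\to\Omega\setminus\Omega_{\eps_0},\qquad \Phi(y,t)=y-t\,\nu_\Omega(y),
\]
is a bilipschitz homeomorphism whose Jacobian is bounded from above by a constant $K=K(\Omega)$, uniformly in $t\in[0,\eps_0)$.

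Next, since $\abs{\nabla d(\cdot,\partial\Omega)}=1$ almost everywhere on $\Omega\setminus\Omega_{\eps_0}$, the coarea formula gives
\[
\Ln(\Omega\setminus\Omega_\eps)=\int_0^\eps \Hn\!\left(\set{x\in\Omega | d(x,\partial\Omega)=t}\right)\,dt
\]
for every $\eps<\eps_0$. The level set $\set{d(\cdot,\partial\Omega)=t}\cap\Omega$ is precisely the image $\Phi(\partial\Omega,t)$, so the area formula applied to $\Phi(\cdot,t)$ together with the Jacobian bound yields $\Hn(\set{d=t}\cap\Omega)\le K\Hn(\partial\Omega)$ for every $t\in[0,\eps_0)$. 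Inserting this inequality in the integral above produces
\[
\Ln(\Omega\setminus\Omega_\eps)\le K\Hn(\partial\Omega)\,\eps,
\]
which is the desired estimate with $C=K\Hn(\partial\Omega)$.

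The only delicate step is the existence of a positive reach $\eps_0$ and the $C^{1,1}$ regularity of $d(\cdot,\partial\Omega)$ in the tubular neighborhood; this is exactly where the assumption that $\partial\Omega$ is $C^{1,1}$ (and not merely $C^1$) is used. Under this regularity, however, the result is classical (see, e.g., the discussion of sets of positive reach or the standard treatment of tubular neighborhoods for Lipschitz-differentiable hypersurfaces), and I would simply cite it rather than redo the construction of $\Phi$ and the bound on its Jacobian.
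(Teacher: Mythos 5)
Your proof is correct and takes essentially the same route as the paper: both arguments integrate the coarea formula for the distance function over $[0,\eps]$ and bound the $(n-1)$-dimensional measure of the inner parallel hypersurfaces uniformly in $t$. The only difference is where that uniform bound comes from --- the paper cites the expansion $P(\Omega_t)=P(\Omega)+C(\Omega)\,t+O(t^2)$ from \cite{maggi}, while you derive it from the Jacobian bound for the normal map $\Phi(y,t)=y-t\,\nu_\Omega(y)$, which is precisely the mechanism behind that expansion.
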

\begin{proof}
It is well known (see for instance \cite[Theorem 17.5]{maggi}) that there exist a constant $C=C(\Omega)$ and $\eps_0=\eps_0(\Omega)>0$ such that 
\[P(\Omega_\eps)= P(\Omega)+C(\Omega)\,\eps+O(\eps^2),\]
for every $0<\eps<\eps_0$. Let $r(x)=d(x,\partial\Omega)$ be the distance from the boundary of $\Omega$. By coarea formula we have
\[\Ln(\Omega\setminus\Omega_\eps)=\int_{\set{0<r<\eps}} \,d\Ln=\int_0^\eps P(\Omega_t)\,dt\le C(\Omega)\eps.\]
\end{proof}
\section{Existence of minimizers}\label{existence}
In this section we prove \autoref{teor: mainth1}: in \autoref{teor: existence} we prove the existence of a minimizer to problem \eqref{problemar}; in \autoref{teor: linftybound} we prove the $L^\infty$ estimate for a minimizer. 

In this section we will assume that $\Omega\subseteq\R^n$ is an open bounded set with $C^{1,1}$ boundary, that $f\in L^2(\Omega)$ is a positive function and that $\beta, C_0$ are positive constants. We consider the energy functional $\mathcal{F}$ defined in \eqref{eq: fun 2}.%

\begin{lemma}
\label{lemma: stimel2} %
Let $n\ge2$ and assume that, if $n=2$, condition \eqref{eq:cond n=2} holds true. Then
there exist two positive constants $c=c(\Omega,f,\beta,C_0)$ and $C=C(\Omega,f,\beta,C_0)$ such that if $v\in\sbv^{\frac{1}{2}}(\R^n)\cap W^{1,2}(\Omega)$, with $\mathcal{F}(v)\le 0$ and $\Omega\subseteq\set{v>0}$, then
\begin{equation}\label{eq:stimasupporto}
\Ln(\Set{v>0})\le c,\end{equation}
\begin{equation}\label{eq:stimanormal2}
\norma{v}_2\le C.
\end{equation}
\end{lemma}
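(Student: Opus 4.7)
The plan is to exploit the inequality $\mathcal F(v)\le 0$ together with the Faber–Krahn–type result in Theorem~\ref{teor: faberkrahn} to turn the bound into a quasi-algebraic inequality for the quantities $m:=\Ln(\{v>0\})$ and $\|v\|_2$. Writing $m_0:=\Ln(\Omega)$ and moving the nonpositive terms to one side, $\mathcal F(v)\le 0$ rewrites as
\[
\int_{\R^n}|\nabla v|^2\,d\Ln+\beta\int_{J_v}(\overline v^2+\underline v^2)\,d\Hn+C_0(m-m_0)\le 2\int_\Omega fv\,d\Ln,
\]
where I have used the hypothesis $\Omega\subseteq\{v>0\}$ to identify $\Ln(\{v>0\}\setminus\Omega)=m-m_0$. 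Since $v\in\sbvv$ with $\Ln(\{v>0\})=m$, Theorem~\ref{teor: faberkrahn} gives
\[
\int_{\R^n}|\nabla v|^2\,d\Ln+\beta\int_{J_v}(\overline v^2+\underline v^2)\,d\Hn\ge \lambda_\beta(B_m)\|v\|_2^2,
\]
where $B_m$ is a ball of volume $m$, while Cauchy–Schwarz on the right-hand side yields $2\int_\Omega fv\le 2\|f\|_{2,\Omega}\|v\|_2$.

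Combining these bounds and applying Young's inequality $2\|f\|_{2,\Omega}\|v\|_2\le \lambda_\beta(B_m)\|v\|_2^2+\|f\|_{2,\Omega}^2/\lambda_\beta(B_m)$ absorbs the quadratic term on the left, leaving
\[
C_0(m-m_0)\le\frac{\|f\|_{2,\Omega}^2}{\lambda_\beta(B_m)}.
\]
Since $m\ge m_0$, Lemma~\ref{lemma: stimaautovalori} applied to balls of volumes $m_0\le m$ gives $\lambda_\beta(B_m)\ge (m_0/m)^{2/n}\lambda_\beta(B_{m_0})$, and substituting produces
\[
C_0(m-m_0)\le \frac{\|f\|_{2,\Omega}^2}{\lambda_\beta(B_{m_0})}\left(\frac{m}{m_0}\right)^{\!2/n}.
\]

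For $n\ge 3$, the exponent $2/n<1$ and the inequality $m-m_0\le K\,m^{2/n}$ immediately forces $m\le c(\Omega,f,\beta,C_0)$, proving \eqref{eq:stimasupporto}. The delicate case is $n=2$: there the exponent equals $1$, both sides are linear in $m$, and the inequality only yields a bound provided the coefficient on the right is strictly smaller than $C_0$. This is exactly the content of assumption \eqref{eq:cond n=2}, which reads $\|f\|_{2,\Omega}^2<C_0\,m_0\,\lambda_\beta(B_{m_0})$ (with $m_0=\mathcal L^2(\Omega)$); under this condition, rearranging gives $m\le C_0 m_0/(C_0-\|f\|_{2,\Omega}^2/(m_0\lambda_\beta(B_{m_0})))$, finishing the support estimate. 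The main subtlety of the proof is therefore tracking the scaling of $\lambda_\beta(B_m)$ via Lemma~\ref{lemma: stimaautovalori} to see that $n=2$ is the critical dimension for which the smallness condition on $\|f\|_2$ becomes indispensable.

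Finally, once $m$ is controlled, Lemma~\ref{lemma: stimaautovalori} provides a strictly positive lower bound $\lambda_\beta(B_m)\ge \lambda_\beta(B_{m_0})(m_0/m)^{2/n}\ge \lambda_0>0$ depending only on $\Omega,f,\beta,C_0$. Plugging back into $\lambda_\beta(B_m)\|v\|_2^2\le 2\|f\|_{2,\Omega}\|v\|_2$ gives $\|v\|_2\le 2\|f\|_{2,\Omega}/\lambda_0$, which is the $L^2$-bound \eqref{eq:stimanormal2}.
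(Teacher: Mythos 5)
Your proof is correct and follows essentially the same route as the paper: both hinge on Theorem~\ref{teor: faberkrahn} to bound the Dirichlet--Robin energy from below by $\lambda_\beta(B_m)\norma{v}_2^2$, the scaling Lemma~\ref{lemma: stimaautovalori} to compare $\lambda_\beta(B_m)$ with $\lambda_\beta(B_{m_0})$, and the resulting quadratic inequality in $\norma{v}_2$ (your Young's-inequality step is exactly the discriminant condition the paper extracts), with the $n=2$ criticality handled identically via \eqref{eq:cond n=2}. The only cosmetic difference is that you obtain the $L^2$ bound by dropping the nonnegative term $C_0(m-m_0)$ and dividing by $\norma{v}_2$, rather than writing out the larger root of the quadratic as the paper does.
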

\begin{proof}
Let $B'$ be a ball with the same measure as $\Set{v>0}$. By \autoref{teor: faberkrahn}
\[
\begin{split}
0\ge\mathcal{F}(v)&\ge\,\lambda_\beta\left(B'\right)\int_{\R^n}v^2\,d\Ln-2\int_{\Omega}fv\, d\Ln\\ &\hphantom{\ge}+C_0\Ln\left(\set{v>0}\setminus\Omega\right).
\end{split}
\]
By \autoref{lemma: stimaautovalori} and Hölder inequality
\begin{equation}
\label{eq: quadratic}
\begin{split}
0\ge& \lambda_\beta(B)\left(\frac{\Ln(\Omega)}{\Ln(\Set{v>0})}\right)^{\frac{2}{n}}\norma{v}_2^2 -2\norma{f}_{2,\Omega}\norma{v}_2\\[3 pt] &+C_0\Ln\left(\set{v>0}\setminus\Omega\right)
\end{split}
\end{equation}
where $B$ is a ball with the same measure as $\Omega$. Obviously~\eqref{eq: quadratic} implies that
\[
\norma{f}_{2,\Omega}^2-\lambda_\beta(B)\left(\frac{\Ln(\Omega)}{\Ln(\Set{v>0})}\right)^{\frac{2}{n}}C_0\Ln\left(\set{v>0}\setminus\Omega\right)\ge 0.
\]
Let $M=\Ln(\set{v>0})$, and notice that, since $\Omega\subseteq\set{v>0}$,
\[
\Ln\left(\set{v>0}\setminus\Omega\right)= M-\Ln(\Omega),
\]
therefore
\[
\norma{f}_{2,\Omega}^2\ge C_0 \lambda_\beta(B)\left(\Ln(\Omega)\right)^{\frac{2}{n}}\left(M^{1-\frac{2}{n}}-M^{-\frac{2}{n}}\Ln(\Omega)\right).
\]
This implies (taking into account~\eqref{eq:cond n=2} if $n=2$) that there exists $c=c(\Omega,f,\beta,C_0)>0$ such that
\[
\Ln(\set{v>0})<c.
\]
Finally observe that by~\eqref{eq: quadratic} it follows
\begin{equation}
\label{eq: norma2}
\norma{v}_2\le C(M),
\end{equation}
where  \[\begin{split}C(M)&=\frac{M^{\frac{2}{n}}\left(\norma{f}_{2,\Omega}+\sqrt{\norma{f}_{2,\Omega}^2- C_0 \lambda_\beta(B)\left(\dfrac{\Ln(\Omega)}{M}\right)^{\frac{2}{n}}\left(M-\Ln(\Omega)\right)}\right)}{\vphantom{\big(}\lambda_\beta(B)\Ln(\Omega)}\\[5 pt]
&\le \frac{2 c^\frac{2}{n} \norma{f}_{2,\Omega}}{\lambda_\beta(B)\Ln(\Omega)}\end{split}\] %
\end{proof}

\begin{oss} 
\label{oss: ominsupp}
Let $v\in\sbvv\cap W^{1,2}(\Omega)$, it is always possible to choose a function $v_0$ such that $v_0=v$ in $\R^n\setminus\Omega$, $\mathcal{F}(v_0)\leq\mathcal{F}(v)$, and $\Omega\subseteq\set{v_0>0}$. Indeed the function $v_0\in W^{1,2}(\Omega)$, weak solution to the following boundary value problem 
\begin{equation}\label{eq: remark dir}\begin{cases} -\Delta v_0= f &\text{in }\Omega,\\
v_0 = \gamma^-_{\partial\Omega}(v) &\text{on }\partial\Omega,\end{cases}
\end{equation}
satisfies
\[\int_\Omega\nabla v_0\cdot\nabla \varphi\,d\Ln=\int_\Omega f\varphi\,d\Ln\]
for every $\varphi\in W^{1,2}_0(\Omega)$ and $v_0=\gamma^-_{\partial\Omega}(v)$ on $\partial\Omega$ in the sense of the trace. Then, extending $v_0$ to be equal to $v$ outside of $\Omega$, we have that $\Omega\subset\Set{v_0>0}$ and $\mathcal{F}(v_0)\leq\mathcal{F}(v)$.%
\end{oss}

\begin{prop}[Existence]
\label{teor: existence}
Let $n\ge2$ and, if $n=2$, assume that condition~\eqref{eq:cond n=2} holds true. Then there exists a solution to problem~\eqref{problemar}.
\end{prop}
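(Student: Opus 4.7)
The plan is to apply the direct method, combining \autoref{oss: ominsupp} and \autoref{lemma: stimel2} to produce a minimizing sequence with uniformly controlled support and $L^2$ norm, and then to invoke the compactness and lower semicontinuity contained in \autoref{teor: sbv} to extract a minimizing limit. Since $\mathcal{F}(0)=0$, we have $\inf\mathcal{F}\le 0$: if equality holds, $v\equiv 0$ is a minimizer, so I may assume $\inf\mathcal{F}<0$ and pick a minimizing sequence $\{v_k\}$ with $\mathcal{F}(v_k)\le 0$. By \autoref{oss: ominsupp} each $v_k$ can be replaced, without raising the energy, by the function solving the Dirichlet problem~\eqref{eq: remark dir} in $\Omega$ and coinciding with $v_k$ outside $\Omega$; after this reduction the (still denoted) sequence $v_k$ satisfies $\Omega\subseteq\{v_k>0\}$ and $\mathcal{F}(v_k)\le 0$.

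With this extra property \autoref{lemma: stimel2} yields $\Ln(\{v_k>0\})\le c$ and $\|v_k\|_2\le C$ uniformly in $k$, and rewriting $\mathcal{F}(v_k)\le 0$ together with Cauchy--Schwarz gives a uniform bound on $\int_{\R^n}\abs{\nabla v_k}^2\,d\Ln+\beta\int_{J_{v_k}}(\overline{v_k}^2+\underline{v_k}^2)\,d\Hn$ by $2\|f\|_{2,\Omega}\|v_k\|_2\le 2C\|f\|_{2,\Omega}$. \autoref{teor: sbv} then produces a subsequence $v_{k_j}\to v$ in $L^2_{\loc}(\R^n)$, with $v\in\sbvv$ and lower semicontinuity of the gradient and jump energies on every open set.

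Passing to a further subsequence I have pointwise a.e.\ convergence and, from the uniform $L^2$ bound, weak convergence in $L^2(\R^n)$, so that $\int_\Omega fv_{k_j}\to\int_\Omega fv$, while Fatou's lemma applied to the characteristic functions $\chi_{\{v_{k_j}>0\}\setminus\Omega}$ delivers the lower semicontinuity of the volume term $C_0\Ln(\{v>0\}\setminus\Omega)$. Combining the four pieces, $\mathcal{F}(v)\le\liminf_j\mathcal{F}(v_{k_j})=\inf\mathcal{F}$. The step I expect to require the most care is the verification of admissibility $v\in W^{1,2}(\Omega)$: applying the jump-set lower semicontinuity of \autoref{teor: sbv} on the open set $\Omega$, where each $v_{k_j}\in W^{1,2}(\Omega)$ has empty jump set, yields $\int_{J_v\cap\Omega}(\overline v^2+\underline v^2)\,d\Hn=0$, and because $v\ge 0$ this forces $\Hn(J_v\cap\Omega)=0$; combined with the absence of a Cantor part for $v^2\in\sbv(\R^n)$ and with $\nabla v\in L^2(\R^n)$, this promotes $v|_\Omega$ to $W^{1,2}(\Omega)$ and closes the proof.
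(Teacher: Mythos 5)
Your proposal is correct and follows essentially the same route as the paper: reduce to a minimizing sequence with $\mathcal{F}\le 0$ and $\Omega\subseteq\set{v_k>0}$ via \autoref{oss: ominsupp}, obtain the uniform bounds of \autoref{lemma: stimel2} and hence \eqref{eq: variazione u}, and conclude with the compactness and lower semicontinuity of \autoref{teor: sbv} together with Fatou for the volume term. The only (cosmetic) difference is in the admissibility step: the paper gets $u\in W^{1,2}(\Omega)$ from weak compactness of the sequence, which is bounded in $W^{1,2}(\Omega)$, whereas you argue that $\Hn(J_u\cap\Omega)=0$ by lower semicontinuity; both are fine.
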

\begin{proof}
Let $\{u_k\}$ be a minimizing sequence for problem~\eqref{problemar}. Without loss of generality we may always assume that, for all $k\in\N$, $\mathcal{F}(u_k)\leq\mathcal{F}(0)=0$, and, by \autoref{oss: ominsupp}, $\Omega\subseteq\set{u_k>0}$. Therefore we have 
\[\begin{split}0\geq\mathcal{F}(u_k)&\geq \int_{\R^n} \abs{\nabla u_k}^2\,d\Ln+\beta\int_{J_{u_k}}\left( \overline{u_k}^2+\underline{u_k}^2\right)\,d\Hn-2\int_\Omega fv\,d\Ln\\
&\geq\int_{\R^n} \abs{\nabla u_k}^2\,d\Ln+\beta\int_{J_{u_k}}\left( \overline{u_k}^2+\underline{u_k}^2\right)\,d\Hn-2\norma{f}_{2,\Omega}\norma{u_k}_{2,\Omega}\,,
\end{split}\]
and by~\eqref{eq:stimanormal2},%
\begin{equation*}
\int_{\R^n} \abs{\nabla u_k}^2\,d\Ln+\beta\int_{J_{u_k}}\left( \overline{u_k}^2+\underline{u_k}^2\right)\,d\Hn\leq C\norma{f}_{2,\Omega}\,.\end{equation*}

Then we have that there exists a positive constant still denoted by $C$, independent on the sequence $\{u_k\}$, such that
\begin{equation}\label{eq: variazione u}
\int_{\R^n}\abs{\nabla u_k}^2\, d\Ln+\int_{J_{u_{k}}}\left(\overline u_k^2+\underline u_k^2\right)\,d\Hn +\int_{\R^n}u_k^2\, d\Ln < C.
\end{equation}
The compactness theorem in $\sbvv$ (\autoref{teor: sbv}), ensures that there exists a subsequence $\{u_{k_j}\}$  and a function $u\in\sbvv\cap W^{1,2}(\Omega)$, such that $u_{k_j}$ converges to $u$ strongly in $L^2_{\loc} (\R^n)$, weakly in $W^{1,2}(\Omega)$, almost everywhere in $\R^n$ and
 \[\begin{split}
    \int_{\R^n}\abs{\nabla u}^2\, d\Ln  \le& \liminf_{j\to+\infty}\int_{\R^n} \abs{\nabla u_{k_j}}^2\,d\Ln,\\[5 pt]
    \int_{J_u}\left( \overline u^2+\underline u^2\right)\, d\Hn  \le& \liminf_{j\to+\infty}\int_{J_{u_{k_j}}} \left(\overline u_{k_j}^2+\underline u_{k_j}^2\right)\, d\Hn,\\[5 pt]
\Ln(\set{u>0}\setminus\Omega)\le&\liminf_{j\to+\infty}\Ln(\set{u_{k_j}>0}\setminus\Omega).\end{split}\]
Finally we have 
\[\mathcal{F}(u)\leq\liminf_{j\to+\infty}\mathcal{F}(u_{k_j})=\inf\Set{\mathcal{F}(v)|v\in\sbvv\cap W^{1,2}(\Omega)},\]
Therefore $u$ is a minimizer to problem~\eqref{problemar}.\qedhere
\end{proof}

\begin{teor}[Euler-Lagrange equation]
\label{teor: euler-lagrange}
Let $u$ be a minimizer to problem \eqref{problemar}, and let $v\in \sbv^{1/2}(\R^n)$ such that $J_v\subseteq J_u$. Assume that there exists $t>0$ such that  $\set{v>0}\subseteq \set{u>t}$ $\Ln$-a.e., and that
\[
\int_{J_u\setminus J_v}v^2\, d\Hn<+\infty.
\]
Then
\begin{equation}
\label{eq: wel}
\int_{\R^n}\nabla u\cdot\nabla v\, d\Ln+\beta\int_{J_u}\left(\overline{u}\gamma^+(v)+\underline{u}\gamma^-(v)\right)\, d\Hn=\int_\Omega fv\,d\Ln,
\end{equation}
where $\gamma^{\pm}=\gamma_{J_u}^\pm$.
\end{teor}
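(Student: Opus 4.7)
The plan is to test the minimality of $u$ against the linear perturbation $w_\eps := u + \eps v$ for small $|\eps|$, expand $\mathcal{F}(w_\eps)$ to first order in $\eps$, and read \eqref{eq: wel} as the vanishing of the linear coefficient.

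For admissibility, note that for $\eps \geq 0$, $w_\eps \geq 0$ follows at once from $u, v \geq 0$. For $\eps < 0$, I would truncate by setting $v_k := v \wedge k$ and restricting $|\eps| \leq t/k$, so that on $\{v_k > 0\} \subseteq \{u > t\}$ one has $w_\eps^k := u + \eps v_k \geq t - |\eps| k \geq 0$, while $w_\eps^k = u \geq 0$ on the complement. The constraint $\{v > 0\} \subseteq \{u > t\}$ further implies $\{w_\eps > 0\} = \{u > 0\}$ (and likewise $\{w_\eps^k > 0\} = \{u > 0\}$), so the term $C_0 \Ln(\set{\cdot >0} \setminus \Omega)$ does not contribute to the variation. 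The same support inclusion ensures $w_\eps \in \sbvv$ with $J_{w_\eps} \subseteq J_u$: on the support of $v$, $u$ is bounded below by $t > 0$, so no new jumps are created and $w_\eps^2 = u^2 + 2\eps uv + \eps^2 v^2$ sits in $\sbv(\R^n)$ with jump set in $J_u$.

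For the first-order expansion, I fix a normal $\nu_u$ on $J_u$; by linearity of the traces, $\gamma^\pm(w_\eps) = \gamma^\pm(u) + \eps\, \gamma^\pm(v)$, and using $\overline{w_\eps}^2 + \underline{w_\eps}^2 = \gamma^+(w_\eps)^2 + \gamma^-(w_\eps)^2$, a direct computation gives
\[
\mathcal{F}(w_\eps) - \mathcal{F}(u) = 2\eps \left[ \int_{\R^n}\!\nabla u \cdot \nabla v\,d\Ln + \beta\!\int_{J_u}\!\left(\overline u\, \gamma^+(v) + \underline u\, \gamma^-(v)\right) d\Hn - \int_\Omega fv\,d\Ln \right] + O(\eps^2),
\]
where the quadratic remainder contains $\int |\nabla v|^2\,d\Ln$ and $\int_{J_u}(\gamma^+(v)^2 + \gamma^-(v)^2)\,d\Hn$, both finite: the former from $v \in \sbvv$, the latter because the hypothesis $\int_{J_u \setminus J_v} v^2\,d\Hn < \infty$ supplies the piece off $J_v$ while the jump energy of $v$ supplies the piece on $J_v$.

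Minimality of $u$ gives $\mathcal{F}(w_\eps) - \mathcal{F}(u) \geq 0$; dividing by $\eps > 0$ and sending $\eps \to 0^+$ yields the "$\geq 0$" half of \eqref{eq: wel}. The reverse inequality follows by running the same argument with $v_k$ and $\eps \in (-t/k, 0)$, dividing by $\eps < 0$ (which flips the sign), and letting $\eps \to 0^-$, obtaining the EL identity for each $v_k$; sending $k \to \infty$ by dominated convergence, using $v, \nabla v \in L^2$ and the integrability of $\gamma^\pm(v)$ against $\overline u, \underline u \in L^2(J_u, \Hn)$ (Cauchy--Schwarz plus the jump-energy bound on $u$), delivers \eqref{eq: wel}. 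The main obstacle is the structural step identifying $w_\eps \in \sbvv$ with $J_{w_\eps} \subseteq J_u$ together with the support-preservation identity; both rest critically on the hypothesis $\{v > 0\} \subseteq \{u > t\}$ with $t > 0$, which prevents the perturbation from creating new jumps or altering the competitor's vanishing set.
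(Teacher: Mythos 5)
Your overall strategy --- perturb along $u+\eps v$, reduce to bounded $v$ by truncation, and pass to the limit by monotone/dominated convergence --- is exactly the paper's. But there is one genuine gap, and it sits at the precise point the paper spends most of its proof on. You verify that the perturbation creates no \emph{new} jumps ($J_{w_\eps}\subseteq J_u$), but you never address the converse phenomenon: for particular values of $\eps$ the perturbation may \emph{close} jumps of $u$ on a set of positive $\Hn$-measure, namely where $\overline{u}+\eps\,\gamma^+(v)=\underline{u}+\eps\,\gamma^-(v)$ while $\overline{u}\ne\underline{u}$. The surface term of $\mathcal{F}(w_\eps)$ is an integral over $J_{w_\eps}$, not over $J_u$, so your ``direct computation'' giving
\[
\mathcal{F}(w_\eps)-\mathcal{F}(u)=2\eps\,[\cdots]+O(\eps^2)
\]
as an \emph{equality} is false in general: the correct statement is
\[
\mathcal{F}(w_\eps)=Q(\eps)-\int_{J_u\setminus J_{w_\eps}}2\,\gamma^+(w_\eps)^2\,d\Hn,
\]
where $Q(\eps)$ is your quadratic expansion, and the subtracted term need not be $O(\eps^2)$. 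The paper handles this by its ``Claim'': the set of $s$ for which $\Hn(J_u\setminus J_{u+sv})>0$ is at most countable, because each point of $J_u$ whose jump closes determines $s$ uniquely via $s=(\gamma^-(u)-\gamma^+(u))/(\gamma^+(v)-\gamma^-(v))$, so the exceptional sets are pairwise disjoint and $\Hn(J_u)<+\infty$.

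The gap is repairable, and in fact your argument survives because the error happens to go in the favorable direction: the omitted term is nonnegative, so $Q(\eps)\ge\mathcal{F}(w_\eps)\ge\mathcal{F}(u)=Q(0)$, hence $0$ minimizes the genuine quadratic polynomial $Q$ and $Q'(0)=0$, which is \eqref{eq: wel} for bounded $v$. (Note also that $\mathcal{F}(u)=Q(0)$ does hold, since $J_{w_0}=J_u$.) Either this one-line observation or the paper's countability claim must be inserted where you currently assert the first-order expansion as an identity; as written, the central display of your proposal is unjustified. A smaller point: finiteness of $\int_{\R^n}|\nabla v|^2\,d\Ln$ does not follow from $v\in\sbvv$ alone (the definition only controls $\int v\,\abs{\nabla v}$ through $\abs{D(v^2)}$), so it should be treated as an implicit standing assumption, as the paper also tacitly does.
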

\begin{proof}
Notice that since $v\in\sbvv$ with $J_v\subseteq J_u$ we have that $v\in\sbvv\cap W^{1,2}(\Omega)$. Assume $v\in \sbv^{1/2}(\R^n)\cap L^\infty(\R^n)$. If $s\in\R$, recalling that $\set{v>0}\subseteq \set{u>t}$ $\Ln$-a.e., 
\[
u(x)+sv(x)=u(x)\ge 0 \qquad \text{$\Ln$-a.e.}\:\forall x\in \set{u\le t},
\]
while, for $\abs{s}$ small enough,
\[
u(x)+s v(x)\ge t-\abs{s}\,\norma{v}_\infty>0 \qquad \forall x\in\set{u>t}. 
\]
 Therefore we still have
\[
u+sv\in\sbv^{\frac{1}{2}}(\R^n,\R^+).
\]
Moreover by minimality of $u$ we have, for every $\abs{s}\le s_0$
\[\begin{split}
\mathcal{F}(u)\le&\mathcal{F}(u+sv)\\[3 pt]
=&\int_{\R^n}\abs{\nabla u+s\nabla v}^2\,d\Ln+\\[3 pt]
&+\int_{J_{u+sv}}\left[\left(\gamma^+(u)+s\gamma^+(v)\right)^2+\left(\gamma^-(u)+s\gamma^-(v)\right)^2\right]\,d\Hn+ \\[3 pt]
&-2\int_{\R^n}f(u+sv)\,d\Ln+C_0\Ln(\set{u>0}).
\end{split}
\]
\marcomm{\textbf{Claim:}}The set
\[S:=\Set{s\in[-s_0,s_0] | \, \Hn(J_{u}\setminus J_{u+sv})\ne 0}\]
is at most countable. 

\vspace{5 pt}
\noindent Let us define
\begin{gather*}
D_0=\Set{x\in J_u | \gamma^+(u)(x)\ne \gamma^-(u)(x)},\\[5 pt]
D_s=\Set{x\in J_u | \gamma^+(u+sv)(x)\ne\gamma^-(u+sv)(x)},
\end{gather*}
and notice that
\[
\Hn(J_u\setminus D_0)=0, \qquad \Hn(J_{u+sv}\setminus D_s)=0.
\]
Then we have to prove that
\[\Set{s\in[-s_0,s_0] | \, \Hn(D_0\setminus D_s)\ne 0}\]
is at most countable. Observe that if $t\ne s$, 
\[
(D\setminus D_t)\cap(D\setminus D_s)=\emptyset.
\]
Indeed if $x\in D\setminus D_s$
\begin{gather*}
\gamma^+(u)(x)\ne \gamma^-(u)(x),\\[5 pt]
\gamma^+(u)+s\gamma^+(v)(x)=\gamma^-(u)+s\gamma^-(v)(x),
\end{gather*}
then
\[
\gamma^+(v)(x)\ne \gamma^-(v)(x),
\]
and so
\[
s=\frac{\gamma^-(u)(x)- \gamma^+(u)(x)}{\gamma^+(v)(x)- \gamma^-(v)(x)}.
\]
If $\mathcal{H}^0$ denotes the counting measure in $\R$, we can write
\[
\int_{-s_0}^{s_0}\Hn(D_0\setminus D_s)\,d\mathcal{H}^0= \Hn\Bigg(\bigcup_{\scalebox{0.6}{$(-s_0,s_0)$}}\!D_0\setminus D_s\Bigg)\le\Hn(J_u)<+\infty,
\]
then the claim is proved. 

We are now able to differentiate in $s=0$ the function $\mathcal{F}(u+sv)$, and observing that $0\notin S$ is a minimum for $\mathcal{F}(u+sv)$, we get
\[
\frac{1}{2}\delta\mathcal{F}(u,v)=\int_{\R^n}\nabla u\cdot\nabla v\, d\Ln+\beta\int_{J_u}\left[\overline{u}\gamma^+(v)+\underline{u}\gamma^-(v)\right]\, d\Hn-\int_\Omega fv\,d\Ln=0.
\]
If $v\notin L^\infty(\R^n)$, we consider $v_h=\min\set{v,h}$. Then
\[
\delta\mathcal{F}(u,v_h)=0 \qquad \forall h>0.
\]
Observe that, since $\gamma^\pm(v_h)=\min\set{\gamma^\pm(v),h}$,
\[
\gamma^\pm(v_h)\to\gamma^\pm(v) \qquad \Hn\text{-a.e. in }J_u.
\]

Therefore, passing to limit for $h\to+\infty$, by dominated convergence on the term
\[
\int_{\R^n}\nabla u\cdot\nabla v_h\, d\Ln,
\]
and by monotone convergence on the terms
\[
\beta\int_{J_u}\left[\overline{u}\gamma^+(v_h)+\underline{u}\gamma^-(v_h)\right]\, d\Hn, \qquad\int_\Omega fv_h\,d\Ln,
\]
we get
\[
0=\lim_h\delta \mathcal{F}(u,v_h)=\delta\mathcal{F}(u,v).
\]
\end{proof}

We now want to use the Euler-Lagrange equation~\eqref{eq: wel} to prove that if $f$ belongs to $L^p(\Omega)$ with $p>n$, and if $u$ is a minimizer to problem~\eqref{problemar} then $u$ belongs to $L^{\infty}(\R^n)$. In order to prove this we need the following

\begin{lemma}
\label{lemma: poincare}
Let $m$ be a positive real number. There exists a positive constant $C=C(m,\beta,n)$ such that, for every function $v\in\sbvv$ with $\Ln(\set{v>0})\leq m$,
\[\left(\int_{\R^n} v^{2\cdot1^*}\,d\Ln\right)^\frac{1}{1^*}\leq C\left[\int_{\R^n} \abs{\nabla v}^2\,d\Ln+\beta\int_{J_v}\left(\overline{v}^2+\underline{v}^2\right)\,d\Hn\right],\]
where $1^*=\dfrac{n}{n-1}$ is the Sobolev conjugate of $1$.
\end{lemma}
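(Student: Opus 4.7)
The plan is to reduce the statement to the classical $\bv$--Sobolev embedding applied to $w := v^2$. Since $v\in\sbvv$ is non-negative, $w\in\sbv(\R^n)\subseteq\bv(\R^n)$ by definition, and $v\in L^2(\R^n)$ forces $w\in L^1(\R^n)$. The Gagliardo--Nirenberg--Sobolev inequality for $\bv$ functions then gives
\[
\norma{w}_{1^*}\le C(n)\,\abs{Dw}(\R^n),
\]
and the key observation is that $\norma{w}_{1^*}=\norma{v}_{2\cdot 1^*}^2$, which is exactly why the exponent $2\cdot 1^*$ on the left-hand side of the claimed inequality arises.

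Next I would expand $\abs{Dw}(\R^n)$ via the decomposition theorem and the $\sbvv$ chain rule. By definition, $\nabla w = 2v\nabla v$ $\Ln$-a.e.\ on $\{v>0\}$, $J_w=J_v$ with $\overline w=\overline v^2$ and $\underline w=\underline v^2$, and the Cantor part of $Dw$ vanishes. Therefore
\[
\abs{Dw}(\R^n) = 2\int_{\R^n} v\,\abs{\nabla v}\,d\Ln + \int_{J_v}\left(\overline v^2-\underline v^2\right)d\Hn.
\]
The jump integral is immediately dominated by $\int_{J_v}(\overline v^2+\underline v^2)\,d\Hn$ since $v\ge 0$. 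For the volume term, Cauchy--Schwarz yields
\[
2\int_{\R^n} v\,\abs{\nabla v}\,d\Ln \le 2\,\norma{v}_{2,\set{v>0}}\,\norma{\nabla v}_2,
\]
and the support constraint $\Ln(\set{v>0})\le m$ together with Hölder's inequality between $L^2$ and $L^{2\cdot 1^*}$ over $\set{v>0}$ gives
\[
\norma{v}_{2,\set{v>0}} \le m^{1/(2n)}\,\norma{v}_{2\cdot 1^*},
\]
where the exponent $1/(2n)$ comes from $\tfrac{1}{2}-\tfrac{1}{2\cdot 1^*}=\tfrac{1}{2n}$.

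Assembling these estimates produces an inequality of the form
\[
\norma{v}_{2\cdot 1^*}^2 \le C_1 m^{1/(2n)}\,\norma{v}_{2\cdot 1^*}\,\norma{\nabla v}_2 + C_2\int_{J_v}\left(\overline v^2+\underline v^2\right)d\Hn,
\]
and I close the argument by Young's inequality $ab\le\tfrac{1}{2}a^2+\tfrac{1}{2}b^2$ applied to the first term on the right, which absorbs the $\norma{v}_{2\cdot 1^*}$ factor into the left-hand side at the cost of enlarging the constant in front of $\norma{\nabla v}_2^2$; the $\beta$-dependence is then swallowed into $C=C(m,\beta,n)$ by dividing the jump term by $\beta$. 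The only substantive step is the chain-rule identification of $D(v^2)$ and the observation that $\norma{v^2}_{1^*}=\norma{v}_{2\cdot 1^*}^2$; the remainder is a standard absorption argument and I do not expect a real obstacle.
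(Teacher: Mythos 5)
Your proposal is correct and follows essentially the same route as the paper's proof: the $\bv$ embedding applied to $v^2$, the chain-rule decomposition of $D(v^2)$, H\"older's inequality with the support bound to control $\norma{v}_{2}$ by $m^{1/(2n)}\norma{v}_{2\cdot 1^*}$, and absorption of the resulting term into the left-hand side. The only immaterial difference is the order of operations: you apply Cauchy--Schwarz to $\int v\abs{\nabla v}\,d\Ln$ and then Young's inequality to the product of norms, whereas the paper applies Young's inequality pointwise first and H\"older afterwards.
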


\begin{proof}
Classical Embedding of $BV(\R^n)$ in $L^{1^*}(\R^n)$ ensures that
\[\begin{split}\left(\int_{\R^n} v^{2\cdot1^*}\,d\Ln\right)^\frac{1}{1^*}\leq& C(n) \abs*{D v^2}(\R^n)\\[5 pt]
=&C(n)\left[\int_{\R^n}2v\abs{\nabla v}\,d\Ln+\int_{J_v}\left(\overline{v}^2+\underline{v}^2\right)\,d\Hn\right].\end{split}\]
For every $\eps>0$, using Young's and Hölder's inequalities, we have
\[\begin{split}\left(\int_{\R^n} v^{2\cdot1^*}\,d\Ln\right)^\frac{1}{1^*}\leq& \frac{C(n)}{\eps}\int_{\R^n}v^2\,d\Ln+\\[3 pt] &+C(n)\left[\eps\int_{\R^n}\abs{\nabla v}^2\,d\Ln+\int_{J_v}\left(\overline{v}^2+\underline{v}^2\right)\,d\Hn\right]\\[5 pt]
\leq& \frac{C(n)\,m^{\frac{1}{n}}}{\eps}\left(\int_{\R^n} v^{2\cdot1^*}\,d\Ln\right)^\frac{1}{1^*}+\\[3 pt] &+C(n)\left[\eps\int_{\R^n}\abs{\nabla v}^2\,d\Ln+\int_{J_v}\left(\overline{v}^2+\underline{v}^2\right)\,d\Hn\right].
\end{split}\]
Setting $\eps=2C(n)m^{\frac{1}{n}}$, we can find two constants  $C(m,n),C(m,\beta,n)>0$ such that
\[\begin{split}\left(\int_{\R^n} v^{2\cdot1^*}\,d\Ln\right)^\frac{1}{1^*}&\leq C(m,n)\left[\int_{\R^n} \abs{\nabla v}^2\,d\Ln+\int_{J_v}\left(\overline{v}^2+\underline{v}^2\right)\,d\Hn\right]\\[5 pt]
&\leq C(m,\beta,n)\left[\int_{\R^n} \abs{\nabla v}^2\,d\Ln+\beta\int_{J_v}\left(\overline{v}^2+\underline{v}^2\right)\,d\Hn\right].\end{split}\]
\end{proof}

We refer to \cite{stampacchia} for the following lemma.

\begin{lemma}\label{lemma: stampacchia}
Let $g: [0,+\infty) \to [0,+\infty)$ be a decreasing function and assume that there exist $C,\alpha >0$ and $\theta>1$ constants such that for every $h>k\ge 0$,
\[
g(h)\le C(h-k)^{-\alpha}g(k)^\theta .
\]
Then there exists a constant $h_0>0$ such that
\[
g(h)=0 \qquad \forall h\ge h_0.
\]
In particular we have
\[
h_0=C^{\frac{1}{\alpha}}g(0)^{\frac{\theta -1}{\alpha}}2^{\theta(\theta-1)}.
\]
\end{lemma}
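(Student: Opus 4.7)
The plan is to use a geometric iteration. I would introduce a decreasing-to-the-limit sequence of levels $h_n \uparrow h_0$ and show that $g(h_n)$ decays geometrically fast along this sequence, forcing $g(h_0)=0$, and hence $g \equiv 0$ on $[h_0,\infty)$ by monotonicity.

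Concretely, let $d>0$ be a parameter to be chosen at the end, set $h_n = d(1-2^{-n})$ (so $h_0=0$, $h_n \uparrow d$, and $h_{n+1}-h_n = d\,2^{-(n+1)}$), and write $\phi_n = g(h_n)$. Applying the hypothesis with $h=h_{n+1}$ and $k=h_n$ gives the recursion
\[
\phi_{n+1} \le C\,(h_{n+1}-h_n)^{-\alpha}\,\phi_n^{\theta} = C\,d^{-\alpha}\,2^{\alpha(n+1)}\,\phi_n^{\theta}.
\]
Next I would postulate the decay $\phi_n \le \phi_0\,r^{n}$ for a constant $r\in(0,1)$ to be chosen, and prove it by induction on $n$. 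Plugging the ansatz into the recursion, the induction step reduces to the algebraic inequality
\[
C\,d^{-\alpha}\,\phi_0^{\theta-1}\,2^{\alpha(n+1)}\,r^{\theta n - n -1} \le 1 \qquad \text{for all }n\ge 0.
\]
The choice $r = 2^{-\alpha/(\theta-1)}$ kills the $n$-dependent factor $2^{\alpha n}r^{(\theta-1)n}$, and what remains is a single scalar condition on $d$, namely $d^{\alpha} \ge C\,\phi_0^{\theta-1}\,2^{\alpha\theta/(\theta-1)}$. Choosing $d$ (that is, $h_0$) at the threshold given in the statement makes this hold, closing the induction.

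Once the geometric decay $\phi_n \le \phi_0 r^n \to 0$ is in hand, the conclusion is immediate: since $g$ is decreasing and $h_n \uparrow h_0$, we have $g(h_0) \le g(h_n) \to 0$, so $g(h_0) = 0$, and then $g(h)\le g(h_0) = 0$ for every $h\ge h_0$.

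I do not expect any real obstacle: the argument is a standard Moser/Stampacchia-type iteration and everything reduces to bookkeeping with the exponents. The only delicate point is the choice of the ratio $r$, which must be tuned exactly to balance the growth factor $2^{\alpha(n+1)}$ coming from the shrinking increments $h_{n+1}-h_n$ against the superlinear contraction produced by the exponent $\theta>1$; any $r<2^{-\alpha/(\theta-1)}$ would work in principle, but taking equality optimizes the resulting value of $h_0$.
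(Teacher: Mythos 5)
Your iteration is the standard Stampacchia argument, and it is correct; the paper itself gives no proof of this lemma (it simply cites the reference \cite{stampacchia}), so there is nothing different to compare against. One small point: carrying out your own computation, the condition that closes the induction is $d^{\alpha}\ge C\,g(0)^{\theta-1}\,2^{\alpha\theta/(\theta-1)}$, i.e.\ the threshold is $h_0=C^{1/\alpha}g(0)^{(\theta-1)/\alpha}\,2^{\theta/(\theta-1)}$, which is the classical constant; the exponent $2^{\theta(\theta-1)}$ displayed in the statement appears to be a typo, and your claim that ``the threshold given in the statement makes this hold'' is only literally true when $\theta\ge2$ (since $2^{\theta(\theta-1)}\ge 2^{\theta/(\theta-1)}$ iff $(\theta-1)^2\ge1$). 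This does not affect the qualitative conclusion $g\equiv0$ on $[h_0,+\infty)$, which is all that is used in the paper.
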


\begin{prop}[$L^\infty$ bound]
\label{teor: linftybound}
Let $n\ge2$ and assume that, if $n=2$, condition~\eqref{eq:cond n=2} holds true. Let $f\in L^p(\Omega)$, with $p>n$. Then there exists a constant $C=C(\Omega,f,p,\beta,C_0)>0$ such that if $u$ is a minimizer to problem \eqref{problemar}, then
\[
\norma{u}_{\infty}\le C.
\]
\end{prop}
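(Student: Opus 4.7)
The plan is a De Giorgi–Stampacchia truncation argument applied to the super-level sets of $u$. Introduce
\[
g(k):=\Ln(\set{u>k}),\qquad k\ge 0,
\]
which is non-increasing and, by \eqref{eq:stimasupporto}, satisfies $g(0)\le c$. The goal is to verify the hypothesis of \autoref{lemma: stampacchia} for $g$, so that $g(h)=0$ for $h$ large, which is precisely the desired $L^\infty$ bound.

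The main step is to test the Euler–Lagrange equation~\eqref{eq: wel} with the truncation $v:=(u-k)^+$, $k>0$. Admissibility in \autoref{teor: euler-lagrange} is straightforward to check: $\set{v>0}=\set{u>k}\subseteq\set{u>k/2}$ (take $t=k/2$); one has $J_v\subseteq J_u$; and on $J_u\setminus J_v$ both approximate limits of $u$ are at most $k$, so $v\equiv 0$ there, trivialising the last integrability condition. Membership $v\in\sbvv$ follows from the chain rule applied to $u^2\in\sbv(\R^n)$ via a Lipschitz composition. Once substituted, the gradient term yields $\int_{\R^n}\abs{\nabla v}^2\,d\Ln$, while the identities $\gamma^\pm(v)=(\gamma^\pm(u)-k)^+$ and the pointwise estimate $\gamma^\pm(u)\ge \gamma^\pm(v)$ give, on $J_u$, $\overline{u}\gamma^+(v)+\underline{u}\gamma^-(v)\ge \overline{v}^2+\underline{v}^2$. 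Together this produces
\[
\int_{\R^n}\abs{\nabla v}^2\,d\Ln+\beta\int_{J_v}\left(\overline{v}^2+\underline{v}^2\right)d\Hn\le\int_\Omega fv\,d\Ln.
\]

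Since $\Ln(\set{v>0})\le g(0)\le c$ uniformly in $k$, \autoref{lemma: poincare} applies with a constant independent of $k$ and yields $\norma{v}_{2\cdot 1^*}^2\le C\int_\Omega fv\,d\Ln$. Writing $p':=p/(p-1)$ and $A(k):=\set{u>k}$, the hypothesis $p>n$ forces $p'<2\cdot 1^*$, so H\"older and Sobolev give
\[
\int_\Omega fv\,d\Ln\le \norma{f}_{p,\Omega}\norma{v}_{2\cdot 1^*}\,g(k)^{\frac{1}{p'}-\frac{1}{2\cdot 1^*}}.
\]
Dividing by $\norma{v}_{2\cdot 1^*}$, raising to the $2\cdot 1^*$ power, and using the pointwise bound $v\ge (h-k)\chi_{A(h)}$ on the left, we arrive at
\[
g(h)\le C\,\norma{f}_{p,\Omega}^{2\cdot 1^*}\,(h-k)^{-2\cdot 1^*}\,g(k)^{\theta},\qquad \theta:=\frac{2\cdot 1^*}{p'}-1,\qquad \forall\,h>k>0.
\]
A direct check gives $\theta>1\iff 1^*>p'\iff p>n$, so \autoref{lemma: stampacchia} (after a harmless shift allowing $k\downarrow 0$) furnishes $h_0=h_0(\Omega,f,p,\beta,C_0)$ with $g(h_0)=0$, i.e.\ $u\le h_0$ $\Ln$-a.e.\ in $\R^n$.

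The delicate point is verifying that $v=(u-k)^+$ is an admissible test function in \autoref{teor: euler-lagrange}, together with the sign/lower bound on the jump term; once those are settled, the iteration is the classical Stampacchia scheme, and the assumption $p>n$ enters precisely at the point where the super-linear exponent $\theta>1$ is needed.
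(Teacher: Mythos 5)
Your proposal is correct and follows essentially the same route as the paper: testing the Euler--Lagrange equation with $(u-k)^+$, bounding the bilinear form from below by $a(\varphi_k,\varphi_k)$ via the jump-term inequality, invoking \autoref{lemma: poincare} (with the uniform support bound \eqref{eq:stimasupporto}), and closing with H\"older and \autoref{lemma: stampacchia}; your exponent $\theta=\tfrac{2\cdot 1^*}{p'}-1$ coincides with the paper's $\tfrac{n+1}{(n-1)\sigma'}$. The only cosmetic difference is that you spell out the admissibility of the test function and the shift $k\downarrow 0$ in Stampacchia's lemma, which the paper leaves implicit.
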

\begin{proof}
Let $\gamma^{\pm}=\gamma_{J_u}^\pm$. For every $\varphi,\psi\in\sbvv$ satisfying $J_\varphi,J_\psi\subseteq J_u$, define
\[
a(\varphi, \psi)=\int_{\R^n}\nabla \varphi\cdot\nabla\psi\, d\Ln+\beta\int_{J_u}\left[\gamma^+(\varphi)\gamma^+(\psi)+\gamma^-(\varphi)\gamma^-(\psi)\right]\, d\Hn.
\]
For every $v$ satisfying the assumptions of \autoref{teor: euler-lagrange}, it holds that
\[
a(u,v)=\int_{\Omega}fv\,d\Ln.
\]
In particular, let us fix $k\in \R^+$ and define
\[
\varphi_k(x)=\begin{cases}
u(x)-k &\text{ if }u(x)\ge k,\\
0 &\text{ if }u(x)<k,
\end{cases}
\]
then
\[
\gamma^+(\varphi_k)(x)=\begin{cases} 
\overline u(x)-k &\text{ if }\overline u(x)\ge k,\\
0 &\text{ if }\overline u(x)<k,
\end{cases}
\]
and analogously for $\gamma^-(\varphi_k)$. Furthermore, let us define 
\[
\mu(k)=\Ln(\set{u>k}).
\]
We want to prove that $\mu(k)=0$ for sufficiently large $k$. From \autoref{teor: euler-lagrange}, we have
\begin{equation}
\label{eq: ela}
a(u,\varphi_k)=\int_{\Omega}f\varphi_k\,d\Ln,
\end{equation}
and we can observe that
\[\begin{split} a(u,\varphi_k)&=\int_{\set{u>k}}\abs{\nabla u}^2\,d\Ln+\beta\int_{J_u\cap\set{u>k}}\left[\overline{u}(\overline{u}-k)+\underline{u}(\underline{u}-k)\right]\,d\Hn \\
&\geq\int_{\set{u>k}}\abs{\nabla u}^2\,d\Ln+\beta\int_{J_u\cap\set{u>k}}\left[(\overline{u}-k)^2+(\underline{u}-k)^2\right]\,d\Hn\\[5 pt]&=a(\varphi_k,\varphi_k). \end{split}\]
Moreover, by minimality, $\mathcal{F}(u)\le \mathcal{F}(0)=0$ and by \autoref{oss: ominsupp}, $\Omega\subseteq\set{u>0}$. Therefore, \eqref{eq:stimasupporto} holds true  %
and we can apply \autoref{lemma: poincare}, having that there exists $C=C(\Omega,f,\beta,C_0)>0$ such that
\begin{equation}
\label{eq: bounda}
\int_{\Omega} f\varphi_k\,d\Ln =a(u,\varphi_k)\geq a(\varphi_k,\varphi_k)\geq C\norma{\varphi_k}_{2\cdot 1^*}^2.    
\end{equation}
On the other hand
\begin{equation}\label{eq: fbound}
\begin{split}
\int_\Omega f\varphi_k\,d\Ln=\int_{\Omega\cap\set{u>k}}f(u-k)\,d\Ln&\leq\left(\int_{\Omega\cap\set{u>k}}f^{\frac{2n}{n+1}}\right)^{\frac{n+1}{2n}}\norma{\varphi_k}_{2\cdot 1^*}\\[5 pt ]
&\leq\norma{f}_{p,\Omega}\,\norma{\varphi_k}_{2\cdot 1^*}\:\mu(k)^\frac{n+1}{2n\sigma'},
\end{split}
\end{equation}
where
\[
\sigma=\frac{p(n+1)}{2n}>1,
\]
since $p>n$. Joining~\eqref{eq: bounda} and~\eqref{eq: fbound}, we have
\begin{equation}\label{eq: prebound}\norma{\varphi_k}_{2\cdot 1^*}\leq C\norma{f}_{p,\Omega}\:\mu(k)^\frac{n+1}{2n\sigma'}.\end{equation}
Let $h>k$, then 
\[\begin{split}(h-k)^{2\cdot 1^*}\mu(h)=&\int_{\set{u>h}}(h-k)^{2\cdot 1^*}\,d\Ln\\
\leq&\int_{\set{u>h}}(u-k)^{2\cdot 1^*}\,d\Ln\\
\leq&\int_{\set{u>k}}(u-k)^{2\cdot 1^*}\,d\Ln=\norma{\varphi_k}_{2\cdot 1^*}^{2\cdot 1^*}.\end{split}\]
Using~\eqref{eq: prebound} and the previous inequality, we have
\[\mu(h)\leq C (h-k)^{-2\cdot 1^*}\mu(k)^\frac{n+1}{(n-1)\sigma'}.\]
Since $p>n$, then $\sigma'<(n+1)/(n-1)$. By \autoref{lemma: stampacchia}, we have that $\mu(h)=0$ for all $h\geq h_0$ with $h_0=h_0(\Omega, f,\beta,C_0)>0$, which implies 
\[\norma{u}_{\infty}\leq h_0.\]
\end{proof}
\begin{proof}[Proof of \autoref{teor: mainth1}]
The result is obtained by joining \autoref{teor: existence} and \autoref{teor: linftybound}.
\end{proof}
\section{Density estimates for the jump set}\label{estimates}
In this section we prove \autoref{teor: mainth2}: in \autoref{teor: lowerbound} we prove the lower bound for minimizers to problem \eqref{problemar}; in \autoref{teor: density1} and \autoref{teor: density2} we prove the density estimates for the jump set of a minimizer to problem \eqref{problemar}. 

In this section we will assume that $\Omega\subseteq\R^n$ is an open bounded set with $C^{1,1}$ boundary, that $f\in L^p(\Omega)$, with $p>n$, is a positive function, and that $\beta, C_0$ are positive constants. We consider the energy functional $\mathcal{F}$ defined in \eqref{eq: fun 2}.

 In order to show that if $u$ is a minimizer to problem~\eqref{problemar} then $u$ is bounded away from 0, we will first prove that there exists a positive constant $\delta$ such that $u>\delta$ almost everywhere in $\Omega$, and then we will show that this implies the existence of a positive constant $\delta_0$ such that $u>\delta_0$ almost everywhere in the set $\set{u>0}$. In the following we will denote by $U_t:=\Set{u<t}\cap\Omega$.

\begin{oss}
Let $u$ be a minimizer to~\eqref{problemar}, by \autoref{oss: ominsupp}, $u$ is a solution to 
\[\begin{cases}
-\Delta u=f & \text{in }\Omega,\\
u\ge0 & \text{on }\partial\Omega
\end{cases}\] 
Let $u_0\in W^{1,2}_0(\Omega)$ be the solution to the following boundary value problem
\begin{equation}\begin{cases}
\label{eq: elomega}
-\Delta u_0=f & \text{in }\Omega,\\
u_0=0 & \text{on }\partial\Omega.
\end{cases}\end{equation}
Then, by maximum principle,
\[
u\geq u_0\quad\text{in $\Omega\subseteq\set{u>0}$}\quad\text{and}\quad
\set{u<t}\cap\Omega=U_t\subseteq\set{u_0<t}\cap\Omega.
\]
\end{oss}

\begin{lemma}
\label{lemma: stimasottolivelli}
There exist two positive constants $t_0=t_0(\Omega,f)$ and $C=C(\Omega, f)$ such that if $u$ is a minimizer to~\eqref{problemar} then for every $t\in[0,t_0]$ it results
\begin{equation}
\label{eq: stimasottolivelli}
\Ln(U_t)\le C\, t.
\end{equation}
\begin{proof}
Let $u_0$ be the solution to \eqref{eq: elomega}, fix $\varepsilon>0$ such that the set
\[
\Omega_\eps=\Set{x\in\Omega | d(x,\partial\Omega)>\varepsilon}
\]
is not empty. Since $u_0$ is superharmonic and non-negative in $\Omega$, by maximum principle we have that \[\alpha=\inf_{\Omega_\eps}u_0>0.\]
then $u_0$ solves
\[
\begin{cases}
-\Delta u_0=f &\text{in } \Omega,\\
u_0 =  0 &\text{on } \partial\Omega, \\
u_0 \ge \alpha &\text{on } \partial\Omega_\eps.
\end{cases}
\]
Therefore, if we consider the solution $v$ to the following boundary value problem
\[
\begin{cases}
-\Delta v=0 &\text{in } \Omega\setminus\bar\Omega_\eps,\\
v =  0 &\text{on } \partial\Omega, \\
v = \alpha &\text{in } \bar\Omega_\eps,
\end{cases}
\]
we have that $u\ge u_0\ge v$ almost everywhere in $\Omega$ and
\[
\Set{u<t}\cap\Omega=U_t \subseteq\Set{u_0<t}\cap\Omega\subseteq\Set{v<t}\cap\Omega.
\]
Hopf Lemma implies that
there exists a constant $\tau=\tau(\Omega)>0$ such that
\[
\frac{\partial v}{\partial \nu}<-\tau\qquad \text{on }\partial\Omega. 
\]
Let $x\in\bar\Omega$, and let $x_0$ be a projection of $x$ onto the boundary $\partial\Omega$, then
\[
\abs{x-x_0}=d(x,\partial\Omega), \qquad \qquad \frac{x-x_0}{\abs{x-x_0}}=-\nu_\Omega(x_0),
\]
where $\nu_\Omega$ denotes the exterior normal to $\partial\Omega$. We can write
\begin{equation}\label{eq: lowboundboundary}\begin{split}
v(x)&=\underbrace{v(x_0)}_{=0}+\nabla v(x_0)\cdot(x-x_0)+o\left(\abs{x-x_0}\right)\\[7 pt]
&=-\frac{\partial v}{\partial\nu}(x_0)\abs{x-x_0}+o\left(\abs{x-x_0}\right) \\[5 pt]
&\ge \tau\abs{x-x_0}+o\left(\abs{x-x_0}\right)\\[5 pt]
&> \frac{\tau}{2}\abs{x-x_0}=\frac{\tau}{2} d(x,\partial\Omega)
\end{split}\end{equation}
for every $x$ such that $d(x,\partial\Omega)<\sigma_0$ for a suitable $\sigma_0=\sigma_0(\Omega,f)>0$. Notice that if $\bar{x}\in\overline{\Omega}$ and $\lim_{x\to \bar{x}}v(x)=0$
then necessarily $\bar{x}\in\partial\Omega$. Therefore, there exists a $t_0=t_0(\Omega,f)>0$ such that $v(x)<t_0$ implies $d(x,\partial\Omega)<\sigma_0$. %
Consequently, if $t<t_0$, we have that
\[
\set{v<t}\subseteq\set{d(x,\partial\Omega)<\sigma_0},
\]
and by \eqref{eq: lowboundboundary}, we get
\[
\Ln\left(U_t\right)\le \Ln\left(\Set{v<t}\right)\le \Ln\left(\Set{x\in\Omega | d(x,\partial\Omega)\le \frac{2}{\tau }\, t}\right).
\]
Since $\Omega$ is $C^{1,1}$, by \autoref{teor: volumedistanza}, we conclude the proof.
\end{proof}
\end{lemma}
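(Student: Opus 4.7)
The plan is to compare the minimizer $u$ from below to the solution of a Dirichlet problem on $\Omega$, and then use classical Hopf-type lower bounds to show that the sublevel set $\{u<t\}$ lies inside a tubular neighborhood of $\partial\Omega$ of width comparable to $t$. Once this is established, \autoref{teor: volumedistanza} immediately yields the linear bound on $\Ln(U_t)$.

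More precisely, I would first invoke \autoref{oss: ominsupp}: replacing $u$ by the function that solves the Dirichlet problem $-\Delta u_0 = f$ in $\Omega$ with $u_0 = \gamma^-_{\partial\Omega}(u)\ge0$ on $\partial\Omega$, one sees that $u$ already satisfies $-\Delta u = f$ in $\Omega$. Since $f>0$ a.e., the maximum principle and the non-negativity of the trace give $u\ge u_0^{\rm Dir}$, where $u_0^{\rm Dir}\in W^{1,2}_0(\Omega)$ solves~\eqref{eq: elomega} with zero Dirichlet data. Consequently
\[
U_t=\{u<t\}\cap\Omega\subseteq\{u_0^{\rm Dir}<t\}\cap\Omega,
\]
so the whole question reduces to a sublevel-set estimate for the fixed Dirichlet solution $u_0^{\rm Dir}$, which depends only on $\Omega$ and $f$.

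Next I would produce a linear lower bound of the form $u_0^{\rm Dir}(x)\ge c\,d(x,\partial\Omega)$ for $x$ in a neighborhood of $\partial\Omega$. The natural way is to insert a harmonic barrier: pick $\eps>0$ so small that $\Omega_\eps\neq\emptyset$, and use the strong maximum principle to get $\alpha=\inf_{\Omega_\eps}u_0^{\rm Dir}>0$. Then compare $u_0^{\rm Dir}$ with the harmonic function $v$ on the shell $\Omega\setminus\overline{\Omega_\eps}$ having $v=0$ on $\partial\Omega$ and $v=\alpha$ on $\partial\Omega_\eps$. Since $-\Delta(u_0^{\rm Dir}-v)=f\ge 0$ on the shell and $u_0^{\rm Dir}-v\ge 0$ on the boundary of the shell, the maximum principle gives $u_0^{\rm Dir}\ge v$ there. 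The $C^{1,1}$ regularity of $\partial\Omega$ lets me apply the Hopf lemma to $v$ at every boundary point, obtaining a uniform $\tau=\tau(\Omega)>0$ with $\partial v/\partial\nu\le -\tau$ on $\partial\Omega$, and a Taylor expansion at the nearest boundary projection then yields $v(x)\ge \tfrac{\tau}{2}\,d(x,\partial\Omega)$ whenever $d(x,\partial\Omega)<\sigma_0$ for some $\sigma_0=\sigma_0(\Omega,f)$.

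Finally, choose $t_0$ so that $t<t_0$ forces $\{v<t\}\subseteq\{d(\cdot,\partial\Omega)<\sigma_0\}$ (this is possible because the only way $v$ can be arbitrarily close to $0$ on $\overline\Omega$ is near $\partial\Omega$). Combining the three inclusions,
\[
U_t\subseteq\{u_0^{\rm Dir}<t\}\subseteq\{v<t\}\subseteq\bigl\{x\in\Omega\,:\,d(x,\partial\Omega)\le 2t/\tau\bigr\}=\Omega\setminus\Omega_{2t/\tau},
\]
and \autoref{teor: volumedistanza}, applicable thanks to the $C^{1,1}$ hypothesis on $\partial\Omega$, gives $\Ln(U_t)\le C\,t$ for all $t\le t_0$, with $C$ depending only on $\Omega$ and $f$. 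The step I expect to be most delicate is the Hopf-lemma argument, because one must verify that the barrier $v$ is both well-defined (requires $\Omega_\eps\neq\emptyset$ and $\alpha>0$, which is where $f>0$ a.e.\ enters) and regular enough up to $\partial\Omega$ that the normal derivative bound holds uniformly — this is precisely where the $C^{1,1}$ hypothesis is used.
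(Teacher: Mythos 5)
Your proposal is correct and follows essentially the same route as the paper: comparison of $u$ with the zero-Dirichlet solution $u_0$, then with the harmonic barrier $v$ on the shell $\Omega\setminus\overline{\Omega_\eps}$, a uniform Hopf-lemma bound $v(x)\ge\tfrac{\tau}{2}d(x,\partial\Omega)$ near $\partial\Omega$, and the conclusion via \autoref{teor: volumedistanza}. No substantive differences to report.
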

\begin{lemma}\label{lemma: diffineq}
Let $g: [0,t_1] \to [0,+\infty)$ be an increasing, absolutely continuous function such that
\begin{equation}
\label{eq: diffineq}
g(t)\le C t^\alpha \left(g'(t)\right)^\sigma \qquad \forall t\in[0,t_1],
\end{equation}
with $C>0$ and $\alpha>\sigma>1$. Then there exists $t_0>0$ such that
\[
g(t)=0 \qquad \forall t\le t_0.
\]
Precisely,
\[
t_0=\left(\frac{C(\alpha-\sigma)}{\sigma-1}g(t_1)^{\frac{\sigma-1}{\sigma}}+t_1^{\frac{\sigma-\alpha}{\sigma}}\right)^{\frac{\sigma}{\sigma-\alpha}}.
\]
\end{lemma}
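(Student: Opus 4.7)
My plan is to treat this as a separable differential inequality. Since $g$ is absolutely continuous and increasing on $[0,t_1]$, I know $g'\ge 0$ almost everywhere, and $g$ is continuous. Define
\[
t_* = \sup\Set{t\in[0,t_1] | g(t)=0},
\]
with the convention $t_*=t_1$ if $g\equiv 0$ on $[0,t_1]$, in which case the conclusion is trivial. Otherwise, by continuity $g(t_*)=0$, and by monotonicity $g(t)>0$ for every $t\in(t_*,t_1]$. My goal will be to prove $t_*\ge t_0$, so that $g(t)=0$ for all $t\le t_*$ gives the claim.

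On the interval $(t_*,t_1]$, the inequality \eqref{eq: diffineq} combined with $g(t)>0$ forces $g'(t)>0$ a.e., and can be rewritten as
\[
\frac{g'(t)}{g(t)^{1/\sigma}} \;\ge\; C^{-1/\sigma}\,t^{-\alpha/\sigma}.
\]
For every $s\in(t_*,t_1)$, I would integrate this inequality from $s$ to $t_1$ using the absolute continuity of $g$ and of the composition $t\mapsto g(t)^{(\sigma-1)/\sigma}$. The left-hand side becomes $\frac{\sigma}{\sigma-1}\bigl[g(t_1)^{(\sigma-1)/\sigma}-g(s)^{(\sigma-1)/\sigma}\bigr]$, while the hypothesis $\alpha>\sigma>1$ ensures $-\alpha/\sigma<-1$, so the primitive on the right is $\frac{\sigma}{\sigma-\alpha}t^{(\sigma-\alpha)/\sigma}$. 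Rearranging the signs (recall $\sigma-\alpha<0$), I obtain
\[
\frac{g(t_1)^{(\sigma-1)/\sigma}-g(s)^{(\sigma-1)/\sigma}}{\sigma-1}\;\ge\;\frac{s^{(\sigma-\alpha)/\sigma}-t_1^{(\sigma-\alpha)/\sigma}}{C^{1/\sigma}(\alpha-\sigma)}.
\]

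Letting $s\to t_*^+$ and using $g(s)\to g(t_*)=0$, the left-hand side tends to $\tfrac{1}{\sigma-1}g(t_1)^{(\sigma-1)/\sigma}$. Solving the resulting inequality for $t_*^{(\sigma-\alpha)/\sigma}$ yields
\[
t_*^{(\sigma-\alpha)/\sigma}\;\le\;\frac{C^{1/\sigma}(\alpha-\sigma)}{\sigma-1}\,g(t_1)^{(\sigma-1)/\sigma}+t_1^{(\sigma-\alpha)/\sigma}.
\]
Since the exponent $\sigma/(\sigma-\alpha)$ is negative, the map $x\mapsto x^{\sigma/(\sigma-\alpha)}$ is strictly decreasing on $(0,\infty)$, so raising both sides to this power reverses the inequality and gives the explicit lower bound $t_*\ge t_0$ claimed in the statement.

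There is no genuine obstacle here; the only technical point that deserves care is the integration of the separated inequality on $(t_*,t_1]$, which must be justified via the absolute continuity of $t\mapsto g(t)^{(\sigma-1)/\sigma}$ (a consequence of $g$ being absolutely continuous, bounded, and bounded away from $0$ on compact subintervals of $(t_*,t_1]$), together with the limit $s\to t_*^+$ performed via monotone convergence on the right-hand side.
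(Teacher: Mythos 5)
Your proof is correct and follows essentially the same route as the paper: separate variables in \eqref{eq: diffineq}, integrate $g'/g^{1/\sigma}\ge C^{-1/\sigma}t^{-\alpha/\sigma}$ up to $t_1$, and exploit that $t^{(\sigma-\alpha)/\sigma}\to+\infty$ as $t\to0^+$; your formulation via $t_*=\sup\{t:g(t)=0\}$ is just a direct rephrasing of the paper's contradiction argument. One small remark: your explicit threshold carries $C^{1/\sigma}$ where the statement (and the paper's proof, which silently replaces $C^{-1/\sigma}$ by $1/C$ in its first display) has $C$ --- your constant is the one actually implied by the hypothesis as written, and the discrepancy is immaterial for the qualitative conclusion used later.
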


\begin{proof}
Assume by contradiction that $g(t)>0$ for every $t>0$. Inequality \eqref{eq: diffineq} implies
\[
\frac{g'}{g^{\frac{1}{\sigma}}}\ge \frac{1}{C}t^{-\frac{\alpha}{\sigma}}.
\]
Integrating between $t_0$ and $t_1$, we have
\[
\frac{\sigma}{\sigma-1}\left(g(t_1)^{\frac{\sigma-1}{\sigma}}-g(t_0)^{\frac{\sigma-1}{\sigma}}\right)\ge \frac{1}{C}\frac{\sigma}{\sigma-\alpha}\left(t_1^{\frac{\sigma-\alpha}{\sigma}}-t_0^{\frac{\sigma-\alpha}{\sigma}}\right).
\]
Since $\alpha>\sigma>1$, we have
\[
0\le g(t_0)^{\frac{\sigma-1}{\sigma}}\le \frac{\sigma-1}{C\left(\alpha-\sigma\right)}\left(t_1^{\frac{\sigma-\alpha}{\sigma}}-t_0^{\frac{\sigma-\alpha}{\sigma}}\right)+g(t_1)^{\frac{\sigma-1}{\sigma}},
\]
which is a contradiction if
\[
t_0\le \left(\frac{C(\alpha-\sigma)}{\sigma-1}g(t_1)^{\frac{\sigma-1}{\sigma}}+t_1^{\frac{\sigma-\alpha}{\sigma}}\right)^{\frac{\sigma}{\sigma-\alpha}}.
\]
\end{proof}

\begin{oss}\label{oss:deltanou}
Let $g$ be as in \autoref{lemma: diffineq} and assume that $g(t_1)\le K$, then $g(t)=0$ for all $0<t<\tilde{t}$ where
\[\tilde{t}=\left(\frac{C(\alpha-\sigma)}{\sigma-1}K^{\frac{\sigma-1}{\sigma}}+t_1^{\frac{\sigma-\alpha}{\sigma}}\right)^{\frac{\sigma}{\sigma-\alpha}}.
\]
\end{oss}
We now have the tools to prove the lower bound inside $\Omega$.
\begin{prop}\label{teor: bound Omega}
There exists a positive constant $\delta=\delta(\Omega, f,p,\beta,C_0)>0$ such that if $u$ is a minimizer to problem \eqref{problemar} then
\[
u\ge\delta\]
almost everywhere in $\Omega$.
\end{prop}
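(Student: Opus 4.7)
The plan is to apply \autoref{lemma: diffineq} to the absolutely continuous function
\[
g(t):=\int_{U_t}(t-u)\,d\Ln,
\]
which satisfies $g'(t)=\Ln(U_t)$ and $g(t_1)\le t_1\,\Ln(\Omega)$ bounded in terms of the data. Together with \autoref{oss:deltanou}, this will yield a uniform $\delta=\delta(\Omega,f,p,\beta,C_0)>0$ with $g\equiv 0$ on $[0,\delta]$, so that $\Ln(U_\delta)=g'(\delta)=0$ and consequently $u\ge\delta$ a.e.\ in $\Omega$.

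The first step is a competitor argument. I would test the minimality of $u$ against
\[
v_t:=\max(u,t)\chi_\Omega+u\,\chi_{\R^n\setminus\Omega}.
\]
Since $\Omega\subseteq\{u>0\}$ by \autoref{oss: ominsupp}, we have $\{v_t>0\}=\{u>0\}$ and the $C_0$-term is unchanged; the Dirichlet energy decreases by $\int_{U_t}|\nabla u|^2$ and the source term by $2\int_{U_t}f(t-u)\ge 0$, and the only possible \emph{increase} is in the jump energy on $\partial\Omega$, where the inside trace has been lifted from $u^-$ to $\max(u^-,t)=t$ on $A:=\{u^-<t\}\cap\partial\Omega$. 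A case-by-case inspection of the jump contribution, using the $L^\infty$ bound of \autoref{teor: linftybound} and the finiteness of $\Hn(\partial\Omega)$, shows that this increase is at most $Ct^2$, and $\mathcal F(u)\le\mathcal F(v_t)$ therefore yields
\[
\int_{U_t}|\nabla u|^2\,d\Ln+2\int_{U_t}f(t-u)\,d\Ln\le C t^2.
\]
In the second step this bound is fed into \autoref{lemma: poincare} applied to $\varphi:=(t-u)^+\chi_\Omega$ extended by zero outside $\Omega$: its positivity set is $U_t$, its gradient energy equals $\int_{U_t}|\nabla u|^2$, and its jumps live on $A$ with trace values $\le t$. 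One obtains $\norma{\varphi}_{2\cdot 1^*}\le Ct$.

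The third step combines this Sobolev-type bound with the sublevel measure estimate $\Ln(U_t)\le C_1 t$ of \autoref{lemma: stimasottolivelli}, in order to derive an inequality $g(t)\le C\,t^\alpha\,(g'(t))^\sigma$ with $\alpha>\sigma>1$ on some $[0,t_1]$; then \autoref{lemma: diffineq} together with \autoref{oss:deltanou} closes the argument. The main obstacle lies precisely here: H\"older's inequality applied naively in the chain $g(t)\le\norma{\varphi}_{2\cdot 1^*}\Ln(U_t)^{(n+1)/(2n)}\le Ct\,g'(t)^{(n+1)/(2n)}$ produces $\sigma=(n+1)/(2n)<1$, which is below the threshold required by \autoref{lemma: diffineq}. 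Lifting this exponent strictly above $1$ while preserving $\alpha>\sigma$ requires a careful balance between the Sobolev estimate, the strip Poincar\'e estimate available because $U_t\subseteq\{d(\cdot,\partial\Omega)<Ct\}$ (as used in the proof of \autoref{lemma: stimasottolivelli}), and the measure decay $\Ln(U_t)\le C_1 t$. Some delicacy in Step~1 is also needed because the trace $u^-$ is not a priori bounded below on $\partial\Omega$, and hence $\Hn(A)$ need not be small as $t\to 0$.
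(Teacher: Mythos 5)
Your competitor $v_t=\max(u,t)$ in $\Omega$ is exactly the one the paper uses, and the overall strategy (a differential inequality closed by \autoref{lemma: diffineq} and \autoref{oss:deltanou}) is also the right one; but, as you yourself observe, the argument does not close: every route you describe yields $\sigma\le 1$ in $g(t)\le Ct^\alpha\left(g'(t)\right)^\sigma$, whereas \autoref{lemma: diffineq} requires $\sigma>1$ (with $\sigma=1$ and $\alpha>1$ one only gets decay of order $\exp(-ct^{1-\alpha})$, not vanishing near $0$). The obstruction is structural, not a matter of ``careful balancing'': with your choice $g(t)=\int_{U_t}(t-u)\,d\Ln$, so that $g'(t)=\Ln(U_t)$, the best available bound is $g(t)\le t\,g'(t)$, and any combination of H\"older, Sobolev and strip-Poincar\'e estimates can only trade powers of $g'(t)$ for powers of $t$, never raise the exponent of $g'(t)$ above $1$; indeed $\sigma>1$ here would amount to a lower bound on $\Ln(U_t)$, the opposite of what is being proved. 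So this is a genuine gap, and the sketched fix does not work for this $g$.

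The two ideas you are missing are the following. (a) The increase of the jump energy must not be estimated by $\beta t^2\Hn(\partial\Omega)$ but by $2\beta t^2\Hn(\partial^* U_t\cap\partial\Omega)$ --- the traces of the competitor differ from those of $u$ only on the essential boundary of the sublevel set --- and this is then controlled by $C t^2 P(U_t;\Omega)$ via \autoref{teor: isopint}, applicable once $\Ln(U_t)\le\Ln(\Omega)/2$, which \autoref{lemma: stimasottolivelli} guarantees for small $t$; this also disposes of your worry that $\Hn(\set{u^-<t}\cap\partial\Omega)$ need not be small. (b) The auxiliary function should be $g(t)=\int_{U_t}u\abs{\nabla u}\,d\Ln=\int_0^t sP(U_s;\Omega)\,ds$, so that $g'(t)=tP(U_t;\Omega)$ carries the perimeter. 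Then Cauchy--Schwarz together with (a) gives $g(t)\le C t\,\Ln(U_t)^{1/2}\left(t^2P(U_t;\Omega)\right)^{1/2}$, and splitting $\Ln(U_t)^{1/2}=\Ln(U_t)^{\eps/2}\,\Ln(U_t)^{(1-\eps)/2}$, estimating the first factor by the relative isoperimetric inequality (\autoref{teor: relisop}) in terms of $P(U_t;\Omega)^{\eps n/(2(n-1))}$ and the second by \autoref{lemma: stimasottolivelli}, produces $\sigma=\frac12+\frac{\eps}{2}\frac{n}{n-1}>1$ together with $\alpha>\sigma$ for a suitable choice of $\eps$. Without (a) the perimeter never enters the right-hand side of the energy estimate, and without (b) it cannot be reabsorbed into $g'$; either omission leaves the exponent stuck below the threshold.
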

\begin{proof}
Assume that $\Omega$ is connected and define the function
\[
u_t(x)=\begin{cases}
\max\set{u,t} &\text{in }\Omega, \\
u &\text{in }\R^n\setminus\Omega
\end{cases}
\]
Recalling that $U_t=\set{u<t}\cap\Omega$, we have
\[
J_{u_t}\setminus\partial^*U_t = J_u\setminus\partial^*U_t,
\]
and on this set $\underline{u_t}=\underline{u}$ and $\overline{u_t}=\overline{u}$.
Then we get by minimality of $u$, and using the fact that $J_{u_t}\cap\partial^* U_t\subseteq\partial\Omega$,%
\[
\begin{split}
0&\ge \mathcal{F}(u)-\mathcal{F}(u_t)\\[7 pt] &=\int_{U_t}\abs{\nabla u}^2\, d\Ln -2\int_{U_t}f(u-t)\,d\Ln+\beta\int_{\partial^*U_t\cap J_u}\left(\underline{u}^2+\overline{u}^2\right)\,d\Hn +\\[5 pt]
&\hphantom{=}-\beta\int_{%
J_{u_t}\cap\partial^* U_t\cap J_u}\left[t^2+(\gamma_{\partial\Omega}^+(u))^2\right]\,d\Hn%
-\beta\int_{\left(J_{u_t}\cap\partial^*U_t\right)\setminus J_u} \left(t^2+u^2\right)\,d\Hn  \\[7 pt]
&\ge \int_{U_t}\abs{\nabla u}^2\, d\Ln-2\beta t^2 \Hn\left(\partial^*U_t\cap\partial\Omega\right)%
\end{split}
\]
where we ignored all the non-negative terms except the integral of $\abs{\nabla u}^2$, and we used that $u\le t$ in $\partial^* U_t\setminus J_u$. By \autoref{lemma: stimasottolivelli}, we can choose $t$ small enough to have $\Ln(U_t)\le \Ln(\Omega)/2$, then applying the isoperimetric inequality in \autoref{teor: isopint} to the set $E=%
U_t$, we get
\begin{equation}
\label{eq: stimagrad}
\int_{U_t}\abs{\nabla u}^2\, d\Ln\le 2\beta \,C\, t^2 \, P(U_t;\Omega).
\end{equation}
Let us define
\[
p(t)=P(U_t;\Omega),
\]
and consider the absolutely continuous function
\[
g(t)=\int_{U_t}u\,\abs{\nabla u}\, d\Ln=\int_0^t sp(s)\, ds.
\]
By minimality of $u$ we can apply the a priori estimates \eqref{eq: variazione u} to prove the equiboundedness of $g$, i.e. there exists $K=K(\Omega,f,\beta,C_0)>0$ such that  $g(t)\le K$ for all $t>0$. Using the Hölder inequality and the estimate \eqref{eq: stimagrad} we have 
\[
g(t)\le\left(\int_{U_t}u^2\, d\Ln\right)^\frac{1}{2}\left(\int_{U_t}\abs{\nabla u}^2\, d\Ln\right)^\frac{1}{2}\le \sqrt{2\beta C}\, t\,\Ln(U_t)^\frac{1}{2}(t^2p(t))^\frac{1}{2}.
\]
Fix $1>\varepsilon>0$. Then we can write $\Ln(U_t)=\Ln(U_t)^{\varepsilon}\,\Ln(U_t)^{1-\varepsilon}$, and by \autoref{lemma: stimasottolivelli} there exists a constant $C=C(\Omega, f,\beta)>0$ such that
\[
g(t)\le C\, t^{2+\frac{1-\varepsilon}{2}}\,\Ln(U_t)^{\frac{\varepsilon}{2}}\, p(t)^\frac{1}{2}.
\]
By the relative isoperimetric inequality in \autoref{teor: relisop}, we can estimate  
\[
\Ln(U_t)^\frac{\eps}{2}\le C(\Omega, n) p(t)^{\frac{\varepsilon n}{2(n-1)}},
\]
and, noticing that $p(t)=g'(t)/t$, we get
\[
g(t)\le C t^\alpha (g'(t))^\sigma,
\]
where
\[
\alpha=2-\frac{\varepsilon}{2}\left(1+\frac{n}{n-1}\right), \qquad \qquad \sigma=\frac{1}{2}+\frac{\varepsilon}{2}\frac{n}{n-1}.
\]
In particular, if we choose
\[
\varepsilon\in\left(\frac{n-1}{n},\frac{3n-3}{3n-1}\right),
\]
we have that $\alpha>\sigma>1$, and then, using \autoref{lemma: diffineq} and \autoref{oss:deltanou}, there exists a $\delta=\delta(\Omega, f,p,\beta,C_0)>0$ such that $g(t)=0$ for every $t<\delta$. Then $\Ln(\set{u<t}\cap\Omega)=0$ for every $t<\delta$, hence
\[u\ge\delta\]
almost everywhere in $\Omega$.

When $\Omega$ is not connected, then
\[
\Omega=\Omega_1\cup\dots\cup\Omega_N,
\]
with $\Omega_i$ pairwise disjoint connected open sets. Using $u_t$ as the function $u$ truncated inside a single $\Omega_i$, we find constants $\delta_i>0$ such that
\[
u(x)\ge \delta_i %
\]
almost everywhere in $\Omega_i$. Therefore choosing $
\delta=\min\set{\delta_1,\dots,\delta_N}$ we have $
u(x)>\delta$ almost everywhere in $\Omega$.
\end{proof}

Finally, following the approach in \cite{CK}, we have
\begin{prop}[Lower Bound]
\label{teor: lowerbound}  There exists a positive constant $\delta_0=\delta_0(\Omega,f,p,\beta,C_0)$ such that if $u$ is a minimizer to problem~\eqref{problemar} then
\[u\ge\delta_0\]
almost everywhere in $\Set{u>0}$.
\end{prop}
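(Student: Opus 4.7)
By Proposition~\ref{teor: bound Omega} there exists $\delta>0$ with $u\ge\delta$ almost everywhere in $\Omega$, so it suffices to produce a uniform lower bound on $\set{u>0}\setminus\Omega$. For $t\in(0,\delta)$ introduce the small-value set $V_t:=\set{0<u<t}\setminus\overline{\Omega}$ and the competitor
\[
v_t:=u\,\chi_{\Omega\cup\set{u\ge t}}.
\]
Since $u\ge\delta>t$ a.e.\ in $\Omega$, we have $v_t\equiv u$ on $\Omega$; and, for $\Ln$-a.e.\ $t$, the superlevel set $\set{u\ge t}$ is of finite perimeter by the coarea formula, which makes $v_t$ an admissible competitor in~\eqref{problemar}.

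Inserting $v_t$ into $\mathcal{F}(u)\le\mathcal{F}(v_t)$ and expanding term by term---the data integrals cancel on $\Omega$, the bulk gradient and the jumps of $u$ interior to $\Omega\cup\set{u\ge t}$ are unchanged, the volume term decreases by $C_0\Ln(V_t)$, and the only new boundary contribution to $\mathcal{F}(v_t)$ sits on the smooth part of $\set{u=t}\setminus\overline{\Omega}$, where the trace of $u$ equals $t$; the analogous ``new'' contributions on the portion of $\partial V_t$ lying in $J_u$ or on $\partial\Omega$ turn out to be non-positive, because $\overline{u}$ and the inner trace of $u\rvert_\Omega$ (respectively $\ge t$ and $\ge\delta$) already pay the corresponding terms in $\mathcal{F}(u)$---one obtains, for a.e.\ $t\in(0,\delta)$,
\begin{equation}\label{eq:planineq}
\int_{V_t}\abs{\nabla u}^2\,d\Ln+C_0\Ln(V_t)\le \beta\, t^2\,\Hn\!\left(\set{u=t}\setminus\overline{\Omega}\right).
\end{equation}
From here the argument runs in parallel with Proposition~\ref{teor: bound Omega}. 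Set $p(t):=\Hn(\set{u=t}\setminus\overline{\Omega})$ and $g(t):=\int_{V_t}u\,\abs{\nabla u}\,d\Ln$, so that by coarea $g(t)=\int_0^t s\,p(s)\,ds$ and $g'(t)=t\,p(t)$ a.e. Cauchy--Schwarz together with~\eqref{eq:planineq}, the relative isoperimetric inequality (\autoref{teor: relisop}) bounding $\Ln(V_t)^{(n-1)/n}\le Cp(t)$ in a fixed ball containing $V_t$ (whose measure is uniformly small by \autoref{lemma: stimel2}), and the Young-type splitting $\Ln(V_t)=\Ln(V_t)^\varepsilon\Ln(V_t)^{1-\varepsilon}$ with a suitable $\varepsilon\in\bigl(\tfrac{n-1}{n},\tfrac{3n-3}{3n-1}\bigr)$, deliver a differential inequality
\[
g(t)\le C\,t^{\alpha}\bigl(g'(t)\bigr)^{\sigma},\qquad \alpha>\sigma>1.
\]
Since $g$ is bounded by the a priori estimate~\eqref{eq: variazione u} combined with the $L^\infty$-bound of Proposition~\ref{teor: linftybound}, \autoref{lemma: diffineq} and \autoref{oss:deltanou} provide $\delta_0>0$ with $g(t)=0$ for every $t<\delta_0$. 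Plugging $g'(t)=t\,p(t)=0$ a.e.\ on $(0,\delta_0)$ back into~\eqref{eq:planineq} forces $\Ln(V_t)=0$ for $t<\delta_0$, whence $u\ge\delta_0$ a.e.\ on $\set{u>0}$.

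\textbf{Main obstacle.} The delicate step is the derivation of~\eqref{eq:planineq}: one has to decompose the newly created boundary $\partial^*\set{v_t>0}\setminus\partial^*\set{u>0}$ according to its intersection with the jump set $J_u$, the boundary $\partial\Omega$, and the smooth level set $\set{u=t}$, and verify that only the last piece produces a strictly positive cost while on the first two the pre-existing traces of $u$ absorb the ``new'' jumps. A secondary technical difficulty is that, unlike in Proposition~\ref{teor: bound Omega} where \autoref{lemma: stimasottolivelli} provides the a priori bound $\Ln(U_t)\le Ct$, no such direct estimate is available for $V_t$; the smallness of $\Ln(V_t)$ must instead be extracted from the global bound of \autoref{lemma: stimel2} (and, if needed, bootstrapped from~\eqref{eq:planineq}) before the isoperimetric inequality can be applied with serviceable constants.
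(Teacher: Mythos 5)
Your competitor and your energy comparison are the paper's: $u\chi_{\set{u\ge t}}$ is admissible for $t<\delta$ because $\Omega\subseteq\set{u>t}$ by \autoref{teor: bound Omega}, the boundary terms decompose exactly as you describe, and the resulting inequality~\eqref{eq:planineq} (with the volume term $C_0\Ln(V_t)$ kept on the left) is the paper's starting point, as is the function $g(t)=\int_0^t s\,p(s)\,ds$. The gap is in the passage to a differential inequality with $\alpha>\sigma>1$. First, the relative isoperimetric inequality does \emph{not} give $\Ln(V_t)^{(n-1)/n}\le C\,p(t)$: applied in a fixed ball $B\supseteq\set{u>0}$ it controls $\Ln(V_t)^{(n-1)/n}$ by the full perimeter $P(V_t;B)$, and $\partial^*V_t$ contains, besides the level set $\set{u=t}$, the portion of $\partial^*\set{u>0}$ and of $J_u$ adjacent to $\set{0<u<t}$. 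Outside $\Omega$ the minimizer is only $\sbv^{\frac12}$, so this jump contribution is precisely what you cannot discard; in \autoref{teor: bound Omega} the analogous step is legitimate only because $u\in W^{1,2}(\Omega)$ and the inequality is applied \emph{inside} $\Omega$, where $P(U_t;\Omega)$ is purely the level set. Second, even granting such a bound, the exponent bookkeeping does not close: since there is no outside analogue of \autoref{lemma: stimasottolivelli}, the factor $\Ln(V_t)^{1-\varepsilon}$ is only bounded by a constant (\autoref{lemma: stimel2} gives $\Ln(\set{u>0})\le c$, not $\Ln(V_t)\le Ct$), so no extra power of $t$ is produced and one finds $\alpha=2-\sigma$, which exceeds $\sigma$ only when $\sigma<1$ — incompatible with \autoref{lemma: diffineq}.

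The paper closes the argument by a different mechanism, and it is worth noting that it does \emph{not} reach the regime $\alpha>\sigma>1$ either. One estimates $h(t)=\int_{\set{u\le t}\setminus J_u}u\abs{\nabla u}\,d\Ln$ by a three-factor Hölder inequality with exponents $2$, $2n$, $2\cdot 1^*$; the middle factor $\Ln(\set{0<u\le t})^{\frac{1}{2n}}$ is controlled through the volume term $C_0\Ln(\set{0<u\le t})\le\beta t\,h'(t)$ coming from the energy comparison, and the last factor is controlled via the $\bv$ embedding applied to $u^2\chi_{\set{u\le t}}$, whose total variation is again bounded by $C\,t\,h'(t)$ using the same comparison. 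This yields $h(t)\le C\bigl(t\,h'(t)\bigr)^{1+\frac{1}{2n}}$, i.e.\ $\alpha=\sigma=1+\frac{1}{2n}$, to which \autoref{lemma: diffineq} does not apply; instead one integrates $h'/h^{\frac{2n}{2n+1}}\ge C/t$, obtains a logarithm, and derives a contradiction with $h\ge 0$ below an explicit threshold $\delta_0=\delta\exp\bigl(-K^{\frac{1}{2n+1}}/C_4\bigr)$. If you replace your isoperimetric step by this $L^{2\cdot1^*}$ embedding argument and the logarithmic integration, the rest of your plan goes through.
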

\begin{proof}
Let $\delta$ be the constant in \autoref{teor: bound Omega}. For every $0<t\le\delta$ let us define the absolutely continuous function
\[h(t)=\int_{\set{u\leq t}\setminus J_u}u\abs{\nabla u}\,\Ln=\int_0^t s P(\set{u>s};\R^n\setminus J_u)\,ds.\]
By minimality of $u$ we can apply the a priori estimates \eqref{eq: variazione u} to prove the equiboundedness of $h$, i.e. there exists $K=K(\Omega,f,\beta,C_0)>0$ such that  $h(t)\le K$ for all $t>0$.
We will show that $h$ satisfies a differential inequality.
For any $0<t<\delta$, let us consider $u^t=u\chi_{\set{u>t}}$, where $\chi_{\set{u>t}}$ is the characteristic funtion of the set $\set{u>t}$, as a competitor for $u$. We observe that, by \autoref{teor: bound Omega}, $\Omega\subseteq\set{u>t}$, so we have that
\[\begin{split}
   0 \ge& \mathcal{F}(u)-\mathcal{F}(u^t)\\[5 pt]
    =&\int_{\set{u\le t}\setminus J_u} \abs{\nabla u}^2\,\Ln+\beta\int_{J_u\cap\set{u>t}^{(0)}}\left( \underline{u}^2+\overline{u}^2\right)\,d\Hn +\\[5 pt] &+\beta\int_{J_u\cap\partial^*\set{u>t}} \underline{u}^2\,d\Hn -\beta\int_{\partial^*\set{u>t}\setminus J_u}{u}^2\,d\Hn +\\[5 pt]
    &+C_0\Ln\left(\Set{0<u\le t}\right).\end{split}\]
Rearranging the terms,
\begin{equation}
\label{eq: gest0}
\begin{split}
&\int_{\set{u\le t}\setminus J_u} \abs{\nabla u}^2\,\Ln+\beta\int_{J_u\cap\set{u>t}^{(0)}} \left(\underline{u}^2+\overline{u}^2\right)\,d\Hn +\\[5 pt] &+\beta\int_{J_u\cap\partial^*\set{u>t}} \underline{u}^2\,d\Hn  +C_0\Ln\left(\Set{0<u\le t}\right)\\[ 5 pt]
&\le \beta t^2 P(\set{u>t};\R^n\setminus J_u)=\beta t h'(t).
\end{split}
\end{equation}
On the other hand using Hölder's inequality, we have
\begin{equation}
\label{eq: gest1}
h(t)\le \left(\int_{\set{u\le t}} \abs{\nabla u}^2\,\Ln\right)^{\frac{1}{2}}\Ln\left(\Set{0<u\le t}\right)^{\frac{1}{2n}}\left(\int_{\set{u\le t}} u^{2\cdot 1^*}\,\Ln\right)^{\frac{1}{2\cdot 1^*} }.
\end{equation}
Classical Embedding of $\bv$ in $L^{1^*}$ applied to $u^2\chi_{\set{u\leq t}}$, ensures
\[\left(\int_{\set{u\le t}} u^{2\cdot 1^*}\,\Ln\right)^{\frac{1}{1^*} }\le C(n) \abs*{D \big(u^2\chi_{\set{u\le t}}\big)}(\R^n),\]
and, using \eqref{eq: gest0},
\begin{equation}
    \label{eq: gest2}
\begin{split}\abs*{D \big(u^2\chi_{\set{u\le t}}\big)}(\R^n)
    =&2\int_{\set{u\le t}} u\abs{\nabla u}\,\Ln+\int_{J_u\cap\set{u>t}^{(0)}}\left( \underline{u}^2+\overline{u}^2\right)\,d\Hn +\\[5 pt]& + \int_{J_u\cap\partial^*\set{u>t}} \underline{u}^2\,d\Hn +\int_{\partial^*\set{u>t}\setminus J_u}{u}^2\,d\Hn\\[5 pt]
    \le& 2t\left(\Ln\left(\Set{0<u\le t}\right)\int_{\set{u\le t}\setminus J_u} \abs{\nabla u}^2\,\Ln\right)^{\frac{1}{2}}+3t h'(t) \\[5 pt] \le& \left(2\frac{\delta\beta}{\sqrt{C_0}}+3\right)th'(t).
\end{split}
\end{equation}
Therefore, joining \eqref{eq: gest1}, \eqref{eq: gest0}, and \eqref{eq: gest2}, we have 
\begin{equation}\label{eq: gdiff}h(t)\le C_3\left(t h'(t)\right)^{1+\frac{1}{2n}},\end{equation}
where 
\[C_3=\beta^\frac{1}{2}\left(\frac{\beta}{C_0}\right)^\frac{1}{2n}C(n)^\frac{1}{2}\left(2\frac{\delta\beta}{\sqrt{C_0}}+3\right)^\frac{1}{2}.\]
By~\eqref{eq: gdiff} we now want to show that there exists $\delta_0=\delta_0(\Omega,f,p,\beta,C_0)>0$ such that $h(t)=0$ for every $0\le t<\delta_0$. Indeed assume by contradiction that $h(t)>0$ for every $0<t\le \delta$. We have
\[\frac{h'(t)}{h(t)^{\frac{2n}{2n+1}}}\ge \frac{C_3^{-\frac{2n}{2n+1}}}{t}.\]
Integrating from $t_0>0$ to $\delta$, we get
\[\left(h(\delta)^{\frac{1}{2n+1}}-h(t_0)^{\frac{1}{2n+1}}\right)\ge C_4 \log\left(\frac{\delta}{t_0}\right),\]
where \[C_4=\dfrac{C_3^{-\frac{2n}{2n+1}}}{2n+1}.\]
Then
\[h(t_0)^{\frac{1}{2n+1}}\le h(\delta)^{\frac{1}{2n+1}} + C_4\log\left(\frac{t_0}{\delta}\right).\]
Finally, for any 
\[
0<t_0\le \tilde{\delta}=\delta\exp\left(-h(\delta)^{\frac{1}{2n+1}}/C_4\right),
\]
we have $h(t_0)<0$, which is a contradiction. Then, setting $\delta_0=\delta\exp\left(-K^{\frac{1}{2n+1}}/C_4\right)\le\tilde{\delta}$, we conclude that $h(t)=0$ for any $0<t<\delta_0$, from which we have 
\[u\ge \delta_0\]
almost everywhere in $\set{u>0}$.
\end{proof}

\begin{oss}
From \autoref{teor: lowerbound}, if $u$ is a minimizer to problem~\eqref{problemar},  we have that \begin{equation}\label{eq: incljump}\partial^*\set{u>0}\subseteq J_u\subseteq K_u.\end{equation} Indeed, on $\partial^*\set{u>0}$ we have that, by definition,  $\underline{u}=0$ and that, since $u\ge\delta_0$ $\Ln$-a.e. in $\set{u>0}$, $\overline{u}\ge\delta_0$.
\end{oss}

\begin{prop}[Density Estimates]
\label{teor: density1}
There exist positive constants $C=C(\Omega,f,p,\beta, C_0)$, ${c=c(\Omega,f,p,\beta,C_0)}$  and $\delta_1=\delta_1(\Omega,f,p,\beta,C_0)$ such that if $u$ is a minimizer to problem \eqref{problemar} then for every $B_r(x)$ such that $B_r(x)\cap\Omega=\emptyset$, we have:%
\begin{enumerate}[label= (\alph*)]
\item For every $x\in\R^n\setminus\Omega$,
\begin{equation}
\label{eq: updensity}
\Hn(J_u\cap B_r(x))\le Cr^{n-1};
\end{equation}
\item For every $x\in K_u$,
\begin{equation}
\label{eq: lowNdensity}
\Ln(B_r(x)\cap\set{u>0})\ge cr^{n};
\end{equation}
\item The function $u$ has bounded support, namely
\[
\set{u>0}\subseteq B_{1/\delta_1}.
\]
\end{enumerate}
\end{prop}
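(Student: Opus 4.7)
The plan is to establish the three claims in sequence, all anchored on the same competitor $v=u\,\chi_{\R^n\setminus B_r(x)}$, which is admissible because $B_r(x)\cap\Omega=\emptyset$ ensures both the source term and the ``in $\Omega$'' part of $\mathcal{F}$ are unchanged. Testing $\mathcal{F}(u)\le\mathcal{F}(v)$ and bounding the \emph{only} new contribution, the jump of $v$ across $\partial B_r(x)$ where it drops from $u$ to $0$, by $\norma{u}_\infty^2\,\Hn(\partial B_r(x)\cap\set{u>0})$, produces the master inequality
\[
\int_{B_r(x)}\abs{\nabla u}^2\,d\Ln+\beta\int_{J_u\cap B_r(x)}(\overline u^2+\underline u^2)\,d\Hn+C_0\Ln(B_r(x)\cap\set{u>0})\le \beta\norma{u}_\infty^2\,\Hn(\partial B_r(x)\cap\set{u>0}).
\]
For (a), I would bound the right-hand side above by $\beta\norma{u}_\infty^2\, n\omega_n r^{n-1}$ and the jump integral on the left from below by $\beta\delta_0^2\,\Hn(J_u\cap B_r(x))$, using that $\overline u\ge\delta_0$ at every jump point by \autoref{teor: lowerbound}; dividing yields (a).

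For (b) I would refine the master inequality by observing that, by coarea, $\Hn(\partial B_r(x)\cap\set{u>0})=m'(r)$ for a.e.\ $r$, where $m(r):=\Ln(B_r(x)\cap\set{u>0})$, so that
\[
C_0 m(r)+\beta\delta_0^2\,\Hn(J_u\cap B_r(x))\le \beta\norma{u}_\infty^2\,m'(r).
\]
Invoking $\partial^*\set{u>0}\subseteq J_u$ from~\eqref{eq: incljump} and the relative isoperimetric inequality applied to $\set{u>0}\cap B_r(x)$ inside $B_r(x)$ (in the non-trivial regime $m(r)\le\omega_n r^n/2$), one gets $m(r)^{(n-1)/n}\le C_n\,\Hn(J_u\cap B_r(x))$. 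Combining the two estimates gives the differential inequality
\[
m(r)^{(n-1)/n}\le C\,m'(r),
\]
and since $x\in K_u=\overline{J_u}$ forces $m(r)>0$ for every $r>0$ (because $\overline u\ge\delta_0$ at jump points makes $\set{u>0}$ of positive density there), a standard integration with $m(0^+)=0$ yields $m(r)\ge c\,r^n$.

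For (c), the a priori bound $\Ln(\set{u>0})\le c_0(\Omega,f,\beta,C_0)$ from \autoref{lemma: stimel2}, combined with (b) applied at $r=d(x,\partial\Omega)$, forces $d(x,\partial\Omega)\le(c_0/c)^{1/n}$ for every $x\in K_u$, so $K_u$ lies in a fixed bounded neighborhood of $\Omega$. On any ball $B\subseteq\R^n\setminus K_u$ the function $u$ has no jumps (and no Cantor part by the $\sbv^{\frac12}$ structure), hence $u\in W^{1,2}(B)$; since $u$ takes values in $\set{0}\cup[\delta_0,+\infty)$, the function $\min\set{u,\delta_0/2}=\tfrac{\delta_0}{2}\chi_{\set{u>0}}$ belongs to $W^{1,2}(B)$, which forces $\set{u>0}\cap B$ to be either $\Ln$-null or of full measure. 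This dichotomy propagates to connected components of $\R^n\setminus K_u$, and the unbounded component must be of the null type because $\Ln(\set{u>0})<+\infty$. Therefore $\set{u>0}$ is contained in a bounded set, and $\delta_1$ can be chosen so that $\set{u>0}\subseteq B_{1/\delta_1}$.

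The main obstacle is part (b): one must package the minimality trade-off together with the isoperimetric inequality so that the $r$-powers on the two sides of the resulting ODE match, and verify that the base case $m(r)>0$ propagates down to $r=0^+$ so the integration is clean; the inclusion $\partial^*\set{u>0}\subseteq J_u$ coming from \autoref{teor: lowerbound} is the key structural ingredient that makes the isoperimetric estimate controllable by jump-set area rather than by unstructured perimeter.
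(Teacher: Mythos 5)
Your proposal is correct and follows the same architecture as the paper's proof: the competitor $u\chi_{\R^n\setminus B_r(x)}$ together with the lower bound $u\ge\delta_0$ from \autoref{teor: lowerbound} gives (a); a differential inequality for $m(r)=\Ln(B_r(x)\cap\set{u>0})$ gives (b); and (b) combined with the a priori bound on $\Ln(\set{u>0})$ gives (c). The one step you do differently is (b): you bound $m(r)^{(n-1)/n}$ by $\Hn(J_u\cap B_r(x))$ via the \emph{relative} isoperimetric inequality in $B_r(x)$ (using $\partial^*\set{u>0}\subseteq J_u$) and then by $m'(r)$ via minimality, whereas the paper applies the \emph{global} isoperimetric inequality to $B_r(x)\cap\set{u>0}^{(1)}$ and bounds its full perimeter by $\bigl(1+2\norma{u}_\infty^2/\delta_0^2\bigr)\Hn(\partial B_r(x)\cap\set{u>0}^{(1)})=C\,m'(r)$ using the already established estimate (a). The paper's route sidesteps the half-volume restriction that your relative isoperimetric step carries: your ODE $m^{(n-1)/n}\le C m'$ is only available at radii $s$ with $m(s)\le\Ln(B_s)/2$, so the integration down to $0$ needs the (easy) extra remark that at the remaining radii $m(s)\ge\omega_n s^n/2$ holds trivially, and one integrates only above the largest such radius; you flag the regime but should carry out this splitting explicitly. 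Your part (c) is fine and in fact supplies the detail the paper leaves implicit, namely why boundedness of $K_u$, together with $\partial^*\set{u>0}\subseteq K_u$ and $\Ln(\set{u>0})<+\infty$, forces $\set{u>0}$ itself to be essentially bounded.
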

\begin{proof}
This theorem is a consequence of \autoref{teor: lowerbound}, since we immediately have\marcomm{\phantom{-}\\ \it (a)}
\[
\int_{J_u\cap B_r(x)}\left(\underline{u}^2+\overline{u}^2\right)\,d\Hn\ge\delta_0^2\Hn(J_u\cap B_r(x)),
\]
and by minimality of $u$ we have
\[
\begin{split}
0&\ge\mathcal{F}(u)-\mathcal{F}(u\chi_{\R^n\setminus B_r(x)})\\ &\ge\int_{J_u\cap B_r(x)}\left(\underline{u}^2+\overline{u}^2\right)\,d\Hn-\int_{\partial B_r(x)\setminus J_u}{u}^2\,d\Hn\\
&\ge\int_{J_u\cap B_r(x)}\left(\underline{u}^2+\overline{u}^2\right)\,d\Hn-\int_{\partial B_r(x)\cap\set{u>0}^{(1)}}\left(\underline{u}^2+\overline{u}^2\right)\,d\Hn \\
&\ge\int_{J_u\cap B_r(x)}\left(\underline{u}^2+\overline{u}^2\right)\,d\Hn-2\norma{u}_\infty^2\Hn\left(\partial B_r(x)\cap\set{u>0}^{(1)}\right),
\end{split}
\]
where, in the second inequality, we have used~\eqref{eq: incljump}. Thus we have
\begin{equation}
\label{eq: updensity1}
\Hn\left(J_u\cap B_r(x)\right)\le\frac{2\norma{u}_\infty^2}{\delta_0^2}\Hn(\partial B_r(x)\cap\set{u>0}^{(1)})\le C r^{n-1},
\end{equation}
where $C=C(\Omega,f,p,\beta,C_0)>0$. 

We now\marcomm{\it (b)} want to use the estimate \eqref{eq: updensity} together with the relative isoperimetric inequality in order to get a differential inequality for the volume of $B_r(x)\cap\set{u>0}^{(1)}$. %
 Let $x\in K_u$, then for almost every $r$ we have%
\[
\begin{split}
0<V(r):=\Ln(B_r(x)\cap\set{u>0}^{(1)})&\le k\,P(B_r(x)\cap\set{u>0}^{(1)})^{\frac{n}{n-1}}\\
&\le k\, \Hn(\partial B_r(x)\cap\set{u>0}^{(1)})^{\frac{n}{n-1}},
\end{split}
\]
where $k=k(\Omega,f,p,\beta,C_0)>0$, and in the last inequality we used that \eqref{eq: incljump} and \eqref{eq: updensity1} imply
\[
\begin{split}
P\left(B_r(x)\cap\set{u>0}^{(1)}\right)&\le
\Hn\left(\partial B_r(x)\cap\set{u>0}^{(1)}\right)+\Hn\left(J_u\cap B_r(x)\right) \\
&\le \left(1+\frac{2\norma{u}_\infty^2}{\delta_0^2}\right) P(B_r(x);\set{u>0}^{(1)}).
\end{split}
\]
Then we have
\[
\frac{V'(r)}{V(r)^{\frac{n-1}{n}}}\ge \frac{1}{k},
\]
which implies
\[
\Ln(B_r(x)\cap\set{u>0}^{(1)})\ge c\,r^{n}.
\]
Finally, let\marcomm{\it (c)} $x\in K_u$ such that $d(x,\partial\Omega)\ge1/\delta_1$. From \eqref{eq: lowNdensity}, noticing that $\mathcal{F}(u)\le\mathcal{F}(0)=0$, we have that
\[
c\,\delta_1^{-n}\le\Ln(\set{u>0}\setminus\Omega)\le\frac{2\norma{u}_\infty}{C_0}\int_\Omega f\,d\Ln,
\]%
which is a contradiction if $\delta_1$ is sufficiently small. Then the thesis is given by \eqref{eq: incljump}.
\end{proof}
Finally, we have
\begin{prop}[Lower Density Estimate]
\label{teor: density2}
There exists a positive constant $c=c(\Omega,f,p,\beta,C_0)$ such that if $u$ is a minimizer to problem~\eqref{problemar} then
\begin{enumerate}
\item For any $x\in K_u$ and $B_r(x)\subseteq\R^n\setminus\Omega$,
\[
\Hn(J_u\cap B_r(x))\ge c r^{n-1};
\]
\item $J_u$ is essentially closed, namely
\[
\Hn(K_u\setminus J_u)=0;
\]
\end{enumerate}
\end{prop}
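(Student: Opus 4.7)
The plan is to deduce both parts from the inclusion $\partial^*\{u>0\}\subseteq J_u$ established in \eqref{eq: incljump}, combined with the relative isoperimetric inequality and one competitor argument.

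For part (1), I would first invoke \autoref{teor: density1}(b) to secure the inner density $\Ln(B_r(x)\cap\{u>0\})\ge c_1 r^n$. Next I would test minimality of $u$ against the competitor $v := u\,\chi_{\R^n\setminus B_r(x)}$, which is admissible because $B_r(x)\cap\Omega=\emptyset$ ensures $v=u$ on $\Omega$ (so in particular $v\in W^{1,2}(\Omega)$); the only new jumps of $v$ sit on $\partial B_r(x)\cap\{\gamma^+_{\partial B_r}(u)>0\}$ and contribute at most $\beta\norma{u}_\infty^2\Hn(\partial B_r(x))$. The inequality $\mathcal{F}(u)\le\mathcal{F}(v)$ rearranges to
\[
C_0\,\Ln(B_r\cap\{u>0\}) + \int_{B_r}\abs{\nabla u}^2\,d\Ln + \beta\int_{J_u\cap B_r}(\overline u^2+\underline u^2)\,d\Hn \le \beta\,\norma{u}_\infty^2\, n\omega_n\, r^{n-1},
\]
so $\Ln(B_r\cap\{u>0\})\le C_3\, r^{n-1}$. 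For $r\le r_0:=\omega_n/(2C_3)$ this produces the complementary density $\Ln(B_r\setminus\{u>0\})\ge \omega_n r^n/2$. Feeding both volume densities into the relative isoperimetric inequality \autoref{teor: relisop} applied in $B_r$ yields $P(\{u>0\};B_r)\ge c\,r^{n-1}$, and then \eqref{eq: incljump} delivers $\Hn(J_u\cap B_r)\ge c\,r^{n-1}$. For the remaining range $r\in[r_0,d(x,\partial\Omega))$ the estimate extends by monotonicity of $r\mapsto\Hn(J_u\cap B_r(x))$ together with the uniform upper bound on the diameter of $\{u>0\}$ coming from \autoref{teor: density1}(c).

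For part (2), I would invoke the standard density theorem for finite Radon measures: the a priori bound \eqref{eq: variazione u} gives $\Hn(J_u)<+\infty$, so $\limsup_{r\to 0^+}\Hn(J_u\cap B_r(x))/r^{n-1}=0$ for $\Hn$-a.e.\ $x\in\R^n\setminus J_u$. On the other hand, \autoref{teor: bound Omega} together with classical elliptic regularity applied to $-\Delta u=f$ in $\Omega$ ensures that $u$ is continuous in $\Omega$, so $J_u\cap\Omega=\emptyset$ and every $x\in K_u\setminus J_u$ satisfies $d(x,\partial\Omega)>0$. Part (1) then forces a strictly positive lower density at every such $x$, contradicting the vanishing above unless $\Hn(K_u\setminus J_u)=0$.

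The delicate point in part (1) is the bookkeeping of the competitor's jumps on $\partial B_r$: for all but countably many radii $r$ one has $\Hn(J_u\cap\partial B_r)=0$, and the resulting estimate propagates to every $r$ by continuity of $r\mapsto\Ln(B_r\cap\{u>0\})$. I also need to verify that the scaling constant in the relative isoperimetric inequality on $B_r$ is independent of $r$, so that the $r^{n-1}$ rate is uniform across scales.
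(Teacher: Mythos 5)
Your strategy for part (1) does not work, for two reasons. First, the arithmetic of the complementary density is reversed: the competitor $u\chi_{\R^n\setminus B_r(x)}$ does give $\Ln(B_r\cap\set{u>0})\le C_3 r^{n-1}$, but this is smaller than $\tfrac12\omega_n r^n$ only when $r\ge 2C_3/\omega_n$, i.e.\ for \emph{large} radii; for small $r$ the bound $C_3r^{n-1}$ exceeds $\Ln(B_r)$ and is vacuous, so no lower bound on $\Ln(B_r\setminus\set{u>0})$ follows and the relative isoperimetric inequality has nothing to bite on. Second, and more fundamentally, the route through $\partial^*\set{u>0}$ cannot yield the lower density estimate for $J_u$: the inclusion \eqref{eq: incljump} is one-way, and $K_u$ may a priori contain ``crack'' points where $\underline{u}\ge\delta_0>0$, i.e.\ where $u$ is bounded away from zero on \emph{both} sides of the jump. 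Near such a point $\Ln(B_r\setminus\set{u>0})$ can vanish, $\partial^*\set{u>0}\cap B_r=\emptyset$, and $P(\set{u>0};B_r)$ carries no information about $\Hn(J_u\cap B_r)$. Ruling out low-density cracks is precisely the hard content of the statement; the paper delegates it to \cite[Theorem 5.1]{CK}, which runs the De Giorgi--Carriero--Leaci scheme: a decay lemma for the Dirichlet energy plus an elimination argument (if $\Hn(J_u\cap B_r)\le\eps r^{n-1}$, replace $u$ in a slightly smaller ball by a jump-free function, compare energies, iterate, and contradict $x\in K_u$). None of that machinery is replaced by your isoperimetric argument.

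For part (2), your skeleton (finiteness of $\Hn(J_u)$ plus the density theorem for Radon measures, played against the uniform lower density of part (1)) is the standard one and matches \cite[Corollary 5.4]{CK}, but it stands or falls with part (1). As written it also silently excludes points of $K_u\setminus J_u$ lying on $\partial\Omega$, where no ball $B_r(x)$ is disjoint from $\Omega$ and part (1) does not apply; the observation that $u$ is continuous inside $\Omega$, so that $K_u\subseteq\R^n\setminus\Omega$, is correct but does not dispose of that boundary case.
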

The proof of \autoref{teor: density2} relies on classical techniques used in \cite{Giorgi} to prove density estimates for the jump set of almost-quasi minimizers of the Mumford-Shah functional. We refer to \cite[Theorem 5.1]{CK} and \cite[Corollary 5.4]{CK} for the details of the proof.

\begin{proof}[Proof of \autoref{teor: mainth2}]
The result is obtained by joining \autoref{teor: lowerbound}, \autoref{teor: density1}, and \autoref{teor: density2}.
\end{proof}

\begin{oss}
Given the summability assumption on the function $f$, we have that minimizers to \eqref{problema} are quasi-minimizers of the functional $\mathcal{G}$, defined on $\sbvv\cap W^{1,2}(\Omega)$ as
\[\mathcal{G}(v)=\int_{\R^n}\abs{\nabla v}^2\,d\Ln+\beta\int_{J_v}(\overline{v}^2+\underline{v}^2)\,d\Hn,\]
that is, there exists $C(\Omega,f,p,\beta,C_0)>0$ and $\alpha(n,p)>n-1$ such that, if $B_r(x)$ is a ball of radius $r\le1$, and $v\in\sbvv\cap W^{1,2}(\Omega)$, with $\set{u\neq v}\subset B_r(x)$, then 
\[\mathcal{G}(u;B_r(x))\le \mathcal{G}(v;B_r(x))+C r^\alpha,\]
where
\[
\mathcal{G}(v;B_r(x)):=\int_{B_r(x)}\!\abs{\nabla v}^2\,d\Ln+\beta\!\int_{J_v\cap\overline{B_r(x)}}(\overline{v}^2+\underline{v}^2)\,d\Hn.
\]
Indeed, let $u$ be a minimizer to \eqref{problema}, let $B_r(x)$ be a ball of radius $r\le1$, and let $v\in\sbvv\cap W^{1,2}(\Omega)$, with $\set{u\neq v}\subset B_r(x)$; by the minimality of $u$ we have that
\[\begin{split}
  \mathcal{G}(u;B_r(x))-2\!\int_{\Omega\cap B_r(x)}fu\,d\Ln&\le\mathcal{G}(v;B_r(x))+C_0\Ln(\set{v>0}\cap B_r(x)\setminus\Omega)\\[3pt]
&\le\mathcal{G}(v;B_r(x))+Cr^n,
\end{split}\]
while
\[\int_{\Omega\cap B_r(x)}fu\,d\Ln\le\norma{f}_{p,\Omega}\norma{u}_\infty \Ln(B_r)^{1/p'}=C(\Omega,f,p,\beta,C_0)r^\alpha,\]
where \[n>\alpha=\frac{n}{p'}>n-1.\]
Finally, we have
\[\mathcal{G}(u;B_r(x)) \le \mathcal{G}(v;B_r(x))+C r^\alpha.\]
\end{oss}

\begin{oss}

Let $u$ be a minimizer to~\eqref{problemar} and let $A=\set{\overline{u}>0}\setminus K_u$, then the boundary of $A$ is equal to $K_u$: in first place, assume by contradiction that there exists an $x\in (\partial A)\setminus K_u$, then $u$ is superharmonic in a small ball centered in $x$ with radius $r$. Therefore, being
\[
\set{u>0}\cap B_r(x)\ne \emptyset,
\]
it is necessary that $u>0$ in the entire ball, and then $x\notin \partial A$, which is a contradiction. In other words, 
\[
\partial A\subseteq K_u
\]
By the same argument we also have that $A$ is open, and moreover $J_u\subseteq \partial A$, then
\[
K_u\subseteq \partial A.
\]
In particular, the pair $(A,u)$ is a minimizer for the functional
\[
\mathcal{F}(E,v)=\int_E\abs{\nabla v}^2\,d\Ln-2\int_{\Omega}fv\,d\Ln+\int_{\partial E}\left(\underline{v}^2+\overline{v}^2\right)\,d\Hn+C_0\Ln(E\setminus\Omega)
\]
over all pairs $(E,v)$ with $E$ open set of finite perimeter containing $\Omega$ and $v\in W^{1,2}(E)$. %
\end{oss}
\newpage

\printbibliography[heading=bibintoc]
\newpage
\Addresses
\end{document}